\journalname{Journal of Global Optimization}
\spnewtheorem{thm}[theorem]{Theorem}{\bfseries}{\normalfont}
\spnewtheorem{cor}[theorem]{Corollary}{\bfseries}{\normalfont}
\spnewtheorem{prop}[theorem]{Proposition}{\bfseries}{\normalfont}
\spnewtheorem{lem}[theorem]{Lemma}{\bfseries}{\normalfont}
\spnewtheorem{exa}[theorem]{Example}{\bfseries}{\normalfont}
\numberwithin{equation}{section}
\begin{document}

\title{Virtuous smoothing for global optimization}%

\author{Jon Lee \and Daphne Skipper}

\institute{Jon Lee \at IOE Dept., Univ. of Michigan. Ann Arbor, Michigan, USA.\\  \email{jonxlee@umich.edu} \and Daphne Skipper \at Dept. of Mathematics, U.S. Naval Academy. Annapolis, Maryland, USA.\\
\email{skipper@usna.edu}}

\date{\today}%

\maketitle

%

\begin{abstract}
In the context of global optimization
and mixed-integer non-linear programming,
generalizing a technique of D'Ambrosio, Fampa, Lee and Vigerske  for handling the square-root function,
we develop a virtuous smoothing method, using cubics, aimed at functions having some limited non-smoothness.
Our results pertain to root functions ($w^p$ with $0<p<1$) and their increasing concave relatives.
We provide (i) a sufficient condition (which applies to functions more general than root functions) for our
smoothing to be increasing and concave, (ii) a proof that when $p=1/q$ for integers $q\geq 2$, our smoothing
lower bounds the root function,  (iii) substantial progress (i.e., a proof for integers $2\leq q\leq 10,000$)
on the conjecture that our smoothing is a sharper bound on the root function than the natural and simpler
``shifted root function'', and (iv) for all root functions,
a quantification
of the superiority (in an average sense)
of our smoothing versus the shifted root function near~0.
\end{abstract}


\section*{Introduction}\label{sec:intro}
Important models for framing and attacking hard (often non-linear) combin\-a\-tor\-ial-optimization
problems are GO (global optimization)
and MINLP (mixed-integer non-linear programming).
Virtually all GO and MINLP solvers  (e.g.,
\verb;SCIP; \cite{Achterberg2009},
\verb;Baron; \cite{sahinidis},
\verb;Couenne; \cite{Belotti09},
\verb;Antigone; \cite{misener-floudas:ANTIGONE:2014})
apply some variant of spatial branch-and-bound (see, \cite{Smith99}, for example), and they rely on NLP (non-linear-programming) solvers, both to solve continuous relaxations (to generate lower bounds for minimization) and often to
generate good feasible solutions (to generate upper bounds).
Sometimes models are organized to have a convex relaxation,
and then either outer approximation, NLP-based branch-and-bound, or some hybrid of the two is employed (e.g., \verb;Bonmin; \cite{Bonami}; also see \cite{BLLW}). In such a case, NLP solvers are also heavily relied upon, and for the same uses as in the non-convex case.

Convergence of most NLP solvers (e.g. \verb;Ipopt; \cite{WB06}) requires that functions be twice continuously differentiable. Yet many models naturally utilize functions with some limited non-differentia\-bil\-i\-ty.
One approach to handle limited non-differentia\-bil\-i\-ty is smoothing.

Of course there is a vast literature on global optimization concerning convexification (see \cite{TawarSahinBook}). Such research aims at developing tractable
lower bounds for non-convex formulations $z:=\min \{f_0(x) ~:~ x\in \mathcal{F}\}$. Even when the only non-convexities are
integrality of some variables, improving the trivial convexification (relaxing integrality) is crucial to the success of algorithms such as outer-approximation (see \cite{Bonami}, for example). But lower bounding $z$ is not the complete story.
For example, spatial branch-and-bound (for formulations that are
non-convex after relaxing integrality) and outer-approximation
(for formulations that are convex after relaxing integrality)
both require solutions of the NLPs (convex or not) obtained by relaxing integrality.
They do this in an effort to find actual (incumbent) feasible solutions
and hence upper bounds on $z$. The success of this step,
requires close approximation of the MINLP and tractability of the NLPs.
Our results for smoothing are aimed at getting tractable NLPs in the presence of functions with limited non-differentiability.
For univariate concave functions, we aim for concave under-estimation
which has the effect of forcing the approximation to be near the
function that we approximate. Note that convexifying a univariate concave function via secant under-estimation can do a rather poor job of
close approximation. This idea of using one version of a function for
convexification in the context of lower bounding $z$ and another version
of a function aimed at coming closer to the MINLP  was implemented for objective functions in \verb;Bonmin;,
in the context of an application; see \cite{BDLLT06,BDLLT12} (a study of optimal water-network refurbishment via MINLP). Additionally, it is possible to simply use our smoothing methodology as a formulation pre-processor for a spatial branch-and-bound algorithm. With such a use,
the smoothing inherits the convexification structure of the MINLP; that is,
our approximation, because of its concavity, allows for secant under-estimation and tangent over-estimation, and this has been implemented in \verb;SCIP; (see \S\ref{subsec:soft}). Furthermore, in the context of
smoothing as a pre-processing step, \cite{DFLV2014,DFLV2015} used the
under-estimation property of our smoothing to get an \emph{a priori}
upper-bound on how much the optimal value of their smoothed MINLP
could be below $z$.
Finally,
also employing smooth concave under-estimators, \cite{Sergeyev1998} describes a global-optimization algorithm for univariate functions;
so we can see another use of such under estimators in global optimization.

Additionally in \cite{BDLLT06,BDLLT12},
an ad-hoc smoothing method is used to address non-differentiability near 0
of the Hazen-Williams (empirical) formula for the pressure drop
of turbulent water flow in a pipe as a function of the flow.
Choosing a small positive $\delta$ and fitting an odd homogeneous
quintic on $[-\delta,\delta]$, so as to match the function and its
first and second derivatives at $\pm\delta$ and the function value at 0,
the resulting piecewise function is smooth enough for NLP solvers.
However on $(-\delta,\delta)$ the quintic is neither an upper bound nor a lower bound
on the function it approximates.

In \cite{GMS13} (a study of the TSP with ``neighborhoods''), $\sqrt{w}$
is smoothed (near 0) by choosing again a small positive $\delta$ and then using a linear extrapolation of
$\sqrt{w}$ at $w=\delta$ to approximate $\sqrt{w}$ on $[0,\delta)$. Shortcomings of this
approach are that the resulting piecewise function is: (i) not twice differentiable at $w=\delta$, and
(ii) over-estimates $\sqrt{w}$ on $[0,\delta)$. Regarding (ii), in many formulations (see
\cite{DFLV2014,DFLV2015}, for example), we need
an under-estimate of the function we are approximating to get a valid relaxation.

To address the identified shortcomings of the methodology of  \cite{GMS13}
for smoothing square roots, motivated by developing
tractable mixed-integer non-linear-optimization models for the Euclidean Steiner Problem (see \cite{ITOR:ITOR12207}),
\cite{DFLV2014,DFLV2015} developed a virtuous method.
On the interval $[0,\delta]$, they fit a homogeneous
cubic, to match the function value of the
square root at the endpoints, and matching the
first and second derivatives at $w=\delta$.
They demonstrate that the resulting smooth function,
though piecewise defined, is increasing, strictly concave,
under-estimates $\sqrt{w}$ on $(0,\delta)$, and is
a stronger under-estimate of  $\sqrt{w}$ than
the more elementary ``shift'' $\sqrt{w+\lambda}-\sqrt{\lambda}$, for some
$\lambda>0$.
To make a fair comparison, $\delta$ is chosen as a
function of $\lambda$ so that the two approximating
functions have the same derivative at 0.
This makes sense because, for each approximation,
we would choose the value of the smoothing parameter
($\delta$ or $\lambda$) as small as the numerics
will tolerate --- that is, we would have an upper bound
for the derivative of the approximation (at zero, where
it is greatest).

For the  $\sqrt{\cdot}$ function, we have depicted all of these smoothings in Figure \ref{fig:smooth}.
From top to bottom:
The ``linear extrapolation''  follows the dot-dashed line ($\cdot\,\text{-}\cdot\text{-}\,\cdot$) below $\delta=0.1$.
The solid curve ($\textbf{-----}$) is the true $\sqrt{\cdot}$.
The ``smooth under-estimation'', which we advocate, follows the dotted curve ($\cdot\!\cdot\!\cdot\!\cdot\!\cdot\!\cdot$) below $\delta=0.1$.
The ``shift'',   chosen to have the same derivative as our preferred smoothing at 0, follows the weaker under-estimate (on the entire non-negative domain) given by the dashed curve (- - -).

\begin{figure}[ht]
\begin{center}
\includegraphics[width=.80\textwidth]{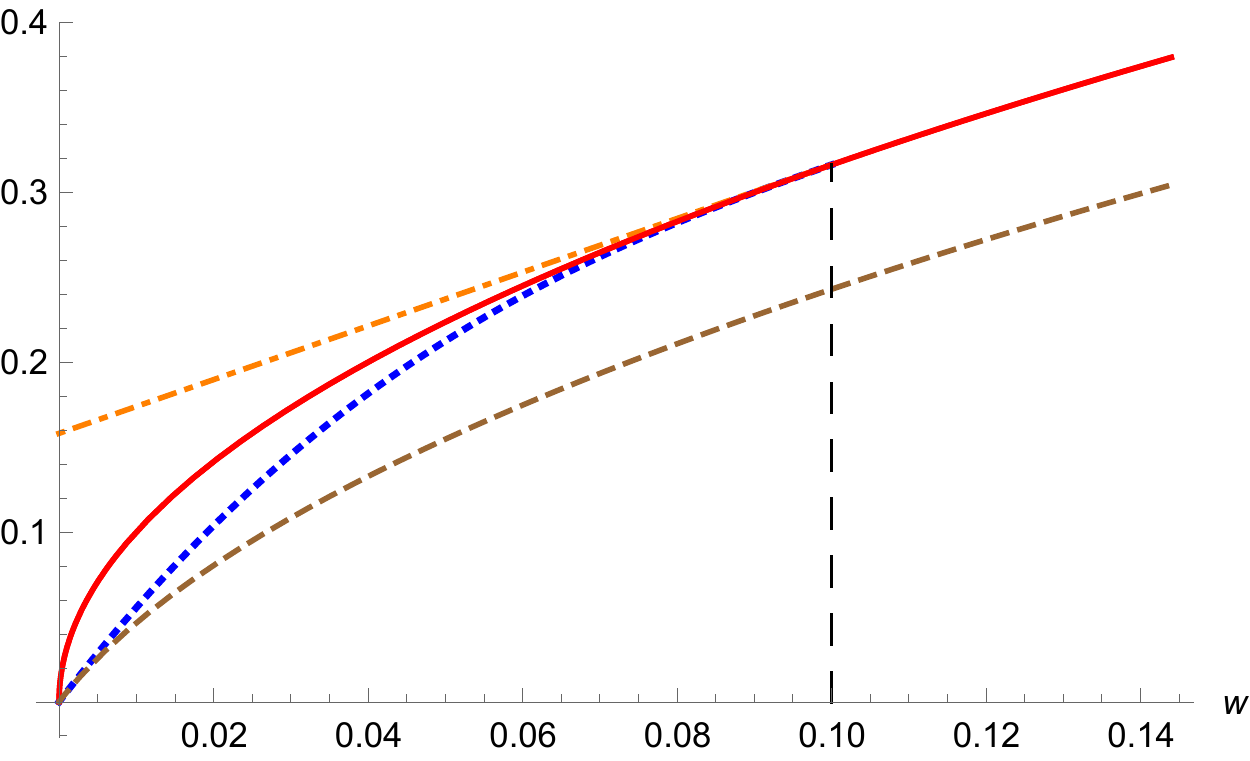}
\caption{Behavior of all smoothings of the square root}
\label{fig:smooth}
\end{center}
\end{figure}

Most of our results are for root functions ($w^p$ with $0<p<1$) and their increasing concave relatives. Smoothing roots (not just square roots) can play an important role
in working with $\ell_q$  norms ($q>1$) and quasi-norms ($0<q<1$):
$\ell_q(x):=(\sum_j x_j^q)^{1/q}$. Depending on $q$ (less than 1, or greater than 1), each inner power or
the outer power is a root function. Of course the use of norms is quite
common. Additionally, in sparse optimization,
$\ell_q$ quasi-norms are used to induce sparsity; see, for example, \cite{Wotao} and the references therein. So, our work can be used to apply global-optimization techniques in this important setting.
Additionally, root functions are natural for
fitting nearly-smooth functions to data that follows an increasing concave trend; for example, cost functions with economies of scale,
and the well-known Cobb-Douglas production function (and generalizations), relating production to labor and capital inputs, where the exponents of the inputs are the output elasticities (see \cite{douglas} and \cite{arrow}).
After such a data-analysis step, fitted functions can be
incorporated into optimization models, and our results would then be applicable; also see \cite{Cozad} for a modern data-driven integrated function-fitting/optimization methodology.
Additionally, roots occur in other signomial functions
besides the Cobb-Douglas production function (see \cite{Duffin1973}).
Finally, besides root functions, there are other simple univariate building-block
functions that our scheme applies to; for example $\log(1+w)$ and
${\rm ArcSinh}(\sqrt{w})$ (see Examples \ref{example:log} and \ref{example:sinh}).

In \S\ref{sec:general}, we provide a sufficient condition (which applies to functions more general than root functions) for our smoothing to be increasing and concave. Moreover, we give an interesting example to illustrate that when our condition is not satisfied, the conclusion need not hold.
In \S\ref{sec:roots}, we establish that when $p=1/q$ for integers $q\geq 2$, our smoothing lower bounds the root function. Having such control over the root function is important in the context of global optimization --- in fact, this was a key motivation of \cite{DFLV2014,DFLV2015}.
In \S\ref{sec:better}, we present substantial progress (i.e., a proof for integers $2\leq q\leq 10,000$) on the conjecture that our smoothing is a sharper bound on the root function than the natural and simpler ``shifted root function''.
In \S\ref{sec:avg} we quantify the average relative performance of our
smoothing and of the shifted root function near 0. We demonstrate
that our smoothing is much better with respect to this performance measure.
Finally, in \S\ref{sec:future}, we make some concluding remarks: describing alternatives, available software, some extended use, and our ongoing work.

\section{General smoothing via a homogenous cubic}\label{sec:general}

\subsection{Construction of our smoothing}\label{sec:cubic}
We are given a function $f$ defined on $[0, +\infty)$ having the following properties:
$f(0)=0$,  $f$ is increasing and concave on $[0, +\infty)$,
$f'(w)$ and $f''(w)$ are defined on all of $(0,+\infty)$, but $f'(0)$ is undefined.
For example, the \emph{root function} $f(w):=w^p$, with $0<p<1$, has these properties.
Our goal is to find a function $g$ that mimics $f$ well, but is
differentiable everywhere (in particular at 0). In addition, because our context
is global optimization, we want $g$ to lower bound $f$ on $[0, +\infty)$.
In this way, we can develop smooth relaxations of certain optimization problems involving $f$.

The definition of our function  $g$ depends on a parameter $\delta>0$.
Our function $g$ is simply $f$ on $[\delta,+\infty)$.
This parameter $\delta$ allows us to control the
derivative of $g$ at 0. Essentially, lowering $\delta$ increases the
derivative of $g$ at 0, and so in practice, we choose $\delta$ as low as the
numerics will tolerate.

We extend $g$ to $[0,\delta]$, as a homogeneous cubic, so that $g(0)=f(0)=0$, $g(\delta)=f(\delta)$, $g'(\delta)=f'(\delta)$ and $g''(\delta)=f''(\delta)$.
The homogeneity  immediately gives us $g(0)=0$, and
such a polynomial is the lowest-degree one that allows us to
 match $f$, $f'$ and $f''$ at $\delta$.
We choose the three coefficients of $g(w) := Aw^3 + Bw^2 + Cw$
so that the remaining three conditions are satisfied.

The constants $A$, $B$ and $C$ are solutions to the system:
\begin{alignat*}{3}
(g(\delta)=)\quad& \delta^3 A + \delta^2 B + \delta C && = f(\delta)\\
(g'(\delta)=)\quad& 3\delta^2 A + 2\delta B + C && = f'(\delta)\\
(g''(\delta)=)\quad& 6\delta A + 2B  && = f''(\delta).
\end{alignat*}

\noindent We find that
\begin{eqnarray*}
A &~=~& \frac{f(\delta)}{\delta^3}~-~\frac{f'(\delta)}{\delta^2}~+~\frac{f''(\delta)}{2\delta}~, \\
B &~=~& -~\frac{3f(\delta)}{\delta^2} ~+~ \frac{3f'(\delta)}{\delta} ~-~ f''(\delta)~, \\
C &~=~& \frac{3f(\delta)}{\delta} ~-~ 2f'(\delta) ~+~ \frac{\delta f''(\delta)}{2}~.
\end{eqnarray*}

By construction, we have the following result.

\begin{prop}
The constructed function $g$ has
$g(0)=0$, $g(\delta)=f(\delta)$, $g'(\delta)=f'(\delta)$ and $g''(\delta)=f''(\delta)$.
\end{prop}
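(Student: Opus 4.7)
The statement is essentially a verification: the coefficients $A$, $B$, $C$ were obtained by solving the $3\times 3$ linear system expressing exactly these interpolation/matching conditions, so the proof amounts to checking that the displayed formulas do in fact satisfy that system (together with the trivial observation that a homogeneous cubic $Aw^3+Bw^2+Cw$ automatically vanishes at $w=0$).

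My plan is as follows. First, I would note that $g(0) = A\cdot 0 + B\cdot 0 + C\cdot 0 = 0$ is immediate from the definition $g(w) := Aw^3 + Bw^2 + Cw$; this handles the first assertion without invoking the values of $A,B,C$ at all. Next, for the remaining three assertions, I would substitute the closed-form expressions for $A$, $B$, and $C$ into the left-hand sides of the three equations
\begin{align*}
\delta^3 A + \delta^2 B + \delta C &= f(\delta), \\
3\delta^2 A + 2\delta B + C &= f'(\delta), \\
6\delta A + 2B &= f''(\delta),
\end{align*}
and collect the coefficients of $f(\delta)$, $f'(\delta)$, and $f''(\delta)$. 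In each case the coefficient of $f(\delta)$ should simplify to $1,\,0,\,0$ (respectively), the coefficient of $f'(\delta)$ to $0,\,1,\,0$, and the coefficient of $f''(\delta)$ to $0,\,0,\,1$. This is pure bookkeeping.

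Alternatively, and perhaps more cleanly, I would simply observe that the coefficient matrix of the linear system is the Vandermonde-like matrix
\[
M \;=\; \begin{pmatrix} \delta^3 & \delta^2 & \delta \\ 3\delta^2 & 2\delta & 1 \\ 6\delta & 2 & 0 \end{pmatrix},
\]
whose determinant is $-2\delta^3 \neq 0$ for $\delta>0$, so the system has a unique solution. Hence, after writing down the displayed formulas for $A,B,C$ and checking (by the direct substitution described above) that they satisfy even one row, the uniqueness argument combined with the derivation already presented in the text makes the result automatic.

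There is no real obstacle here: the only thing to watch out for is sign errors when combining the $f(\delta)/\delta^3$, $f'(\delta)/\delta^2$, and $f''(\delta)/\delta$ terms. Because the proposition is labelled as holding ``by construction,'' I would present the proof as a one-line remark that $g(0)=0$ follows from homogeneity, and that the other three equalities are precisely the defining equations for $A$, $B$, $C$.
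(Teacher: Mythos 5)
Your proposal is correct and matches the paper's reasoning, which simply asserts the result ``by construction'' since the three non-trivial equalities are exactly the defining linear system for $A$, $B$, $C$, and $g(0)=0$ is immediate from homogeneity. The optional determinant/uniqueness remark is a harmless elaboration but not needed.
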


\subsection{Increasing and concave}\label{sec:properties}

Mimicking $f$ should mean that $g$ is increasing and concave on all of $[0,+\infty)$.
Next, we give a sufficient condition for this. The condition is a bound on the amount of negative
curvature of $f$ at $\delta$.

\begin{thm}\label{thm:inc_conv}
Let $\delta>0$ be given.
On $[\delta,+\infty)$, let $f$ be increasing and differentiable, with $f'$ non-increasing (decreasing).
Let $f(0)=0$, and let $f$ be twice differentiable at $\delta$.
If
\[
f''(\delta) \geq \frac{2}{\delta}\left( f'(\delta)-\frac{f(\delta)}{\delta}\right), \tag{$T_\delta$}\label{Tdelta}
\]
the associated function $g$ is increasing and concave (strictly concave) on  $[0,+\infty)$.
\end{thm}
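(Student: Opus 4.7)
The plan is to handle $g$ on the two pieces $[0,\delta]$ and $[\delta,+\infty)$ separately and patch at $\delta$. On $[\delta,+\infty)$, $g=f$ is increasing by hypothesis and concave because $f'$ is non-increasing, so all the real work is on the cubic piece. On $[0,\delta]$, $g''(w) = 6Aw + 2B$ is affine in $w$, so verifying $g''\leq 0$ throughout reduces to checking the two endpoints.

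The right endpoint is immediate: $g''(\delta) = f''(\delta)$, and this is $\leq 0$ because $f'$ is non-increasing on $[\delta,+\infty)$ and is differentiable at $\delta$ (the right-sided difference quotient of $f'$ at $\delta$ is non-positive). The left endpoint is the crux of the proof. Writing out the closed form for $B$, the inequality $g''(0)=2B\leq 0$ is equivalent to
\[
f''(\delta) \;\geq\; \frac{3}{\delta}\left(f'(\delta) - \frac{f(\delta)}{\delta}\right),
\]
which at first glance looks strictly stronger than \eqref{Tdelta} because of the $3$ in place of $2$. The way in is to first observe, using $f''(\delta)\leq 0$ together with \eqref{Tdelta}, that $Y := f'(\delta)-f(\delta)/\delta$ must itself be $\leq 0$; but then multiplying a non-positive $Y$ by $3/\delta$ only weakens the bound, i.e., $3Y/\delta \leq 2Y/\delta$, so \eqref{Tdelta} in fact already delivers $f''(\delta) \geq 3Y/\delta$. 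This factor-of-$2$-vs.-factor-of-$3$ comparison, made possible by the forced sign of $Y$, is the heart of the argument.

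With concavity on $[0,\delta]$ in hand, monotonicity there is automatic: $g'$ is non-increasing on $[0,\delta]$, so $g'(w)\geq g'(\delta)=f'(\delta)\geq 0$. Since the two pieces glue $C^2$ at $\delta$ by construction, $g'$ is continuous and non-increasing on all of $[0,+\infty)$ (giving global concavity), and $g'\geq 0$ throughout (giving global monotonicity). The parenthetical strict-concavity assertion follows along the same route, noting that the affine $g''$ can vanish on $[0,\delta]$ at most at a single point, except in a degenerate case (forcing $A=B=0$, hence $f'(\delta)=f(\delta)/\delta$ and $f''(\delta)=0$) that is ruled out when $f'$ is strictly decreasing. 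The only real obstacle is the single algebraic manipulation at $w=0$; everything else is routine.
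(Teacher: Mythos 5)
Your proof is correct, and at bottom it exploits the same fact as the paper --- that $g''$ is affine on $[0,\delta]$, $g''(\delta)=f''(\delta)\le 0$, and \eqref{Tdelta} controls the other side --- but the tactics differ. The paper observes that \eqref{Tdelta} is \emph{exactly} the condition $g'''\equiv 6A\ge 0$ (clear denominators in $A\ge 0$), so $g''$ is non-decreasing on $[0,\delta]$ and is bounded above by $g''(\delta)\le 0$; no inspection of $g''(0)$ is needed, and \eqref{Tdelta} shows up without any wrangling. You instead compute $g''(0)=2B$, discover the superficially stronger requirement $f''(\delta)\ge \tfrac{3}{\delta}\bigl(f'(\delta)-f(\delta)/\delta\bigr)$, and rescue it by first deducing the sign of $Y:=f'(\delta)-f(\delta)/\delta$ from \eqref{Tdelta} together with $f''(\delta)\le 0$. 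That auxiliary step (forcing $Y\le 0$, then comparing $3Y/\delta\le 2Y/\delta$) is valid, and your route has the small virtue of making explicit that \eqref{Tdelta} already implies $f'(\delta)\le f(\delta)/\delta$; but it is a bit roundabout compared with recognizing \eqref{Tdelta} directly as the sign condition on $A$, which is the clean conceptual reading of the hypothesis.
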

\begin{proof}
For $w\in [0,\delta]$, the third derivative of $g$ is the constant
\[
g'''(w) = \frac{6}{\delta^3}\left(f(\delta) - \delta f'(\delta) + \frac{\delta^2}{2}f''(\delta)\right).
\]
The first factor is clearly positive.  The inequality requirement on $f''(\delta)$ in our hypothesis makes the second factor non-negative.  We conclude that the third derivative of $g$ is non-negative, implying that the second derivative of $g$ is non-decreasing to a non-positive (negative) value, $g''(\delta)=f''(\delta)$, on the interval $[0,\delta]$.  Consequently, $g'(w)$ is non-increasing (decreasing) to $g'(\delta)=f'(\delta)>0.$

Note that the assumptions on $f$ imply that for $w\in [\delta,+\infty)$, $g'(w)=f'(w)$ is non-increasing (decreasing) and
$g'(w)=f'(w)>0$. Therefore, $g$ is concave (strictly concave) and increasing on $[0, +\infty)$.
\qed
\end{proof}

Root functions, that is power functions of the form
$f(w)=w^p$ with $0<p<1$, fit our general framework:
$f(0)=0$,  $f$ is increasing and concave on $[0, +\infty)$,
$f'(w)$ and $f''(w)$ are defined on all of $(0,+\infty)$, but $f'(0)$ is undefined.
Indeed, our work was inspired by the construction for $p=1/2$ in \cite{DFLV2014,DFLV2015}.
Next, we verify that Theorem \ref{thm:inc_conv} applies to root functions.

\begin{lem}
For $f(w)=w^p$, $0<p<1$, we have that $f$ satisfies \ref{Tdelta} for all $\delta>0$.
\end{lem}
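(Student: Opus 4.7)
The plan is to reduce $(T_\delta)$ to an elementary algebraic inequality via direct computation, exploiting the homogeneity of the root function. First I would compute the three quantities appearing in $(T_\delta)$:
\begin{align*}
f(\delta) &= \delta^p, \\
f'(\delta) &= p\,\delta^{p-1}, \\
f''(\delta) &= p(p-1)\,\delta^{p-2}.
\end{align*}
Substituting these into $(T_\delta)$, both sides become scalar multiples of $\delta^{p-2}$: the left side is $p(p-1)\delta^{p-2}$, while the right side simplifies to $\frac{2}{\delta}\bigl(p\delta^{p-1}-\delta^{p-1}\bigr) = 2(p-1)\delta^{p-2}$. Thus every dependence on $\delta$ factors out, as one expects from the scaling behavior of $w^p$.

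Next I would divide both sides of the resulting inequality by $(p-1)\delta^{p-2}$. Since $0<p<1$, the factor $(p-1)$ is strictly negative, so the direction of the inequality reverses; the factor $\delta^{p-2}$ is strictly positive and does not affect the direction. The inequality then reduces to $p \leq 2$, which is immediate from $p<1$. Tracing the chain of equivalences backward, this establishes $(T_\delta)$ for every $\delta>0$.

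There is no real obstacle here — the statement is a direct verification, and the only point meriting care is the sign of $(p-1)$ when dividing through (and the fact that the inequality is actually strict, matching the ``strictly concave'' conclusion available in Theorem~\ref{thm:inc_conv}). I would conclude with one sentence emphasizing that this is why our construction applies uniformly to the entire family of root functions, independently of the choice of smoothing parameter $\delta$.
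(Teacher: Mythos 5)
Your proof is correct and takes essentially the same approach as the paper: compute $f(\delta)$, $f'(\delta)$, $f''(\delta)$, substitute into $(T_\delta)$, and observe that all $\delta$-dependence factors out. The only (cosmetic) difference is the final step of algebra: the paper rearranges to $(p-1)(p-2)\geq 0$ and notes both factors are negative, whereas you divide through by the negative quantity $(p-1)\delta^{p-2}$ to obtain $p\leq 2$; these are the same inequality.
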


\begin{proof}
For $f(w)=w^p$, the inequality \ref{Tdelta} is
\[
p(p-1)\delta^{p-2} \geq \frac{2}{\delta}\left(p\delta^{p-1}-\frac{\delta^p}{\delta}\right),
\]
which simplifies to
$(p-1)(p-2) \geq 0$, and which is satisfied because $p<1$
\qed
\end{proof}

So, by Theorem \ref{thm:inc_conv}, we have the following result.
\begin{cor} \label{cor:pow_concave}
For $f(w)=w^p$, $0<p<1$, the associated $g$ is increasing and strictly concave on $[0,+\infty)$.
\end{cor}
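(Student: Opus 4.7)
The plan is to invoke Theorem~\ref{thm:inc_conv} directly, using the immediately preceding lemma to supply the only nontrivial hypothesis. All other hypotheses of the theorem are routine properties of $f(w)=w^p$ for $\delta>0$ fixed.

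Specifically, I would verify in order: (i) $f(0)=0^p=0$; (ii) on $[\delta,+\infty)$, $f'(w)=pw^{p-1}>0$, so $f$ is increasing; (iii) on $[\delta,+\infty)$, $f''(w)=p(p-1)w^{p-2}<0$ because $0<p<1$, so $f'$ is in fact \emph{strictly} decreasing (not merely non-increasing); (iv) $f$ is twice differentiable at $\delta>0$; and (v) the curvature condition \ref{Tdelta} holds at every $\delta>0$, which is exactly the content of the preceding lemma. With all hypotheses of Theorem~\ref{thm:inc_conv} verified, the theorem's conclusion applies. Moreover, because $f'$ is strictly (not merely weakly) decreasing on $[\delta,+\infty)$, we fall into the parenthetical ``(strict)'' branch of the theorem's statement, which delivers strict concavity of $g$ rather than only concavity.

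There is really no serious obstacle here: the corollary is a bookkeeping step that packages the previous two results. The only subtlety worth flagging in the write-up is making sure to invoke the \emph{strict} version of Theorem~\ref{thm:inc_conv}'s conclusion, which requires explicitly noting $p(p-1)<0$ on $(0,1)$ so that $f''<0$ strictly on $[\delta,+\infty)$, and correspondingly that $g''(\delta)=f''(\delta)<0$ feeds the strict branch of the theorem's argument on $[0,\delta]$ as well.
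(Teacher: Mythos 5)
Your proposal matches the paper's route exactly: the paper derives the corollary by combining Theorem~\ref{thm:inc_conv} with the lemma that root functions satisfy~\ref{Tdelta} for all $\delta>0$, and the remaining hypotheses are checked just as you do. Your added care in noting that $f''<0$ strictly (so the parenthetical strict branch of the theorem applies) is a correct and welcome piece of bookkeeping that the paper leaves implicit.
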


The following very useful fact is easy to see.
\begin{lem}\label{lemma_cone}
The set of $f$ with domain $[\delta,+\infty)$ satisfying any of
\begin{itemize}
\item $f(0)=0$,
\item $f$ is increasing,
\item $f$ is differentiable,
\item $f'$ is non-increasing or decreasing,
\item $f$ is twice differentiable at $\delta$,
\item \ref{Tdelta}
\end{itemize}
is a (blunt) cone in function space.
\end{lem}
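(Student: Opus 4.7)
The plan is to verify, property by property, that the indicated subset of function space is closed under (i) addition and (ii) positive scalar multiplication; since none of these conditions is satisfied by the zero function (or, at worst, we simply exclude it to make the cone blunt), we get a blunt cone. Each of the six properties is either a pointwise linear (in)equality in $f$ and finitely many of its derivatives, or a pointwise monotonicity condition—and both classes are preserved by the two conic operations.

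First I would dispatch the easy ones. The conditions $f(0)=0$, $f$ differentiable on $[\delta,+\infty)$, and $f$ twice differentiable at $\delta$ all cut out \emph{linear subspaces} of function space, hence are closed under arbitrary linear combinations, in particular under sums and positive scalar multiples. For $f$ increasing and $f'$ non-increasing (respectively decreasing), I would use the elementary facts that the sum of two non-decreasing functions is non-decreasing (with strict monotonicity preserved whenever at least one summand is strictly monotone), and that a positive scalar multiple of a (strictly) monotone function has the same monotonicity.

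The one property that deserves an explicit argument is \ref{Tdelta}. Rewrite it as a linear functional evaluated at $f$:
\[
L_\delta(f) \;:=\; f''(\delta) \;-\; \frac{2}{\delta}\, f'(\delta) \;+\; \frac{2}{\delta^2}\, f(\delta) \;\geq\; 0.
\]
Because $L_\delta$ is $\mathbb{R}$-linear on the space of functions twice differentiable at $\delta$, we have $L_\delta(\alpha f_1 + \beta f_2) = \alpha L_\delta(f_1) + \beta L_\delta(f_2)$, which is non-negative whenever $\alpha,\beta \geq 0$ and $L_\delta(f_1),L_\delta(f_2) \geq 0$. Thus \ref{Tdelta} is preserved under sums and under positive scalar multiplication.

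Putting the six cases together, each of the listed sets is closed under positive scalar multiplication and under addition of two of its members, which is exactly the definition of a convex cone; insisting $f\not\equiv 0$ makes it blunt. The only mild obstacle is noticing that \ref{Tdelta} has the linear form above—once that is observed, everything reduces to one-line verifications, so I would not belabor the routine pointwise arguments in the written proof.
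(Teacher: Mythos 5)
Your proof is correct. The paper itself offers no proof at all---it simply asserts the lemma as ``easy to see''---so there is no alternative argument to compare against; your property-by-property check (linear subspace conditions, monotonicity preserved under sums and positive scalar multiples, and the key observation that \ref{Tdelta} is the nonnegativity of the linear functional $L_\delta(f)=f''(\delta)-\tfrac{2}{\delta}f'(\delta)+\tfrac{2}{\delta^2}f(\delta)$) is exactly the routine verification the authors had in mind, and your remark on why the cone may be blunt (the strict versions of the monotonicity conditions exclude the zero function) correctly accounts for the parenthetical ``(blunt).''
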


As a consequence of Lemma \ref{lemma_cone} and Corollary \ref{cor:pow_concave},
adding a root function to any  $f$ that is differentiable at 0 and satisfies \emph{all} of
the properties listed in Lemma \ref{lemma_cone},
we get such a function that is  non-differentiable at 0
and has decreasing first derivative.

Next, we give a couple of natural examples to demonstrate that Theorem \ref{thm:inc_conv}
applies to other functions besides root functions.

\begin{exa}\label{example:log}
Let $f(w):=\log(1+w)$, which is clearly concave and increasing on $[0,+\infty)$, and has $f(0)=0$.
To verify that \ref{Tdelta} is satisfied for $\delta > 0$, we consider the expression $f''(\delta)-\frac{2}{\delta}\left(f'(\delta) - \frac{f(\delta)}{\delta}\right)$, which simplifies to
\[
\frac{2(1+\delta)^2 \log(1+\delta) -3\delta^2-2\delta}{\delta^2(1+\delta)^2}.
\]
The denominator of this expression is positive so we focus on the numerator, which we define to be $k(\delta)$.  The second derivative of the numerator, $k''(\delta)=4\log(1+\delta)$, is positive for $\delta > 0$, implying that the $k'(\delta)=4(1+\delta)\log(1+\delta)-6\delta$ increases from $k'(0)=0$.  Therefore, $k(\delta)$ likewise increases from $k(0)=0$.  We conclude that $T_\delta$ is satisfied for $\delta > 0$.
Note that by Lemma \ref{lemma_cone}, we can add $\sqrt{w}$ to $f$ to get an example
that is not differentiable at 0.   \phantom{a}\hfill $\diamondsuit$
\end{exa}

\begin{exa}\label{example:sinh}
Let $f(w):={\rm ArcSinh}(\sqrt{w})=\log(\sqrt{w}+\sqrt{1+w})$ on $[0,+\infty)$. Clearly $f(0)=0$.
We have $f'(w)=1/(2 \sqrt{w} \sqrt{w+1})$, which is non-negative on $(0,+\infty)$, so $f$ is increasing, but it is not differentiable at 0.
Additionally, $f''(x)= (-2 x-1)/(4 x^{3/2} (x+1)^{3/2})$, which is clearly negative on $(0,+\infty)$, so $f$ is strictly concave.

To verify that \ref{Tdelta} is satisfied for $\delta > 0$, we consider the expression $f''(\delta)-\frac{2}{\delta}\left(f'(\delta) - \frac{f(\delta)}{\delta}\right)$, which simplifies to
\[
\frac{8 (\delta+1)^{3/2} {\rm ArcSinh}\left(\sqrt{\delta}\right)-\sqrt{\delta} (6 \delta+5)}{4 \delta^2
   (\delta+1)^{3/2}}.
\]
The denominator of this expression is positive so we focus on the numerator, which we define to be $k(\delta)$.
We have that $k(0)=0$, so we will be able to conclude that $k$ is
non-negative if we can show that it is non-decreasing.
Note that this $k$ is not concave, so we cannot follow the method of the previous example.
Rather,
we calculate
\[
k'(\delta)=\frac{3}{2 \sqrt{\delta}}-5 \sqrt{\delta}+12 \sqrt{\delta+1} \log
   \left(\sqrt{\delta}+\sqrt{\delta+1}\right).
\]
We will seek to demonstrate $k'(\delta)\geq 0$ by showing
 $k'(\delta)-3/2 \sqrt{\delta}\geq 0$.
 The derivative of $k'(\delta)-3/2 \sqrt{\delta}$ is
 \[
 \frac{7}{2 \sqrt{\delta}}+\frac{6\,  {\rm ArcSinh}\left(\sqrt{\delta}\right)}{\sqrt{\delta+1}},
 \]
 which is clearly non-negative, and so
  \ref{Tdelta} is satisfied for $\delta > 0$.
  \phantom{a}\hfill $\diamondsuit$
  \end{exa}

It is natural to wonder whether $T_\delta$ is really needed in Theorem \ref{thm:inc_conv}.
Next, we give an example, where all conditions of Theorem \ref{thm:inc_conv} hold, except for
$T_\delta$, and the conclusion of Theorem \ref{thm:inc_conv} does \emph{not} hold.

\begin{exa}
For $\epsilon>0$,
let
\[
f(w):= \left\{
         \begin{array}{ll}
           \sqrt{w-1}-\sqrt{\epsilon}+\frac{1+\epsilon}{2\sqrt{\epsilon}}, & w\geq 1+\epsilon; \\
           \frac{1}{2\sqrt{\epsilon}}w, & w\leq 1+\epsilon.
         \end{array}
       \right.
\]
This function $f$, solid in Figure \ref{fig:graph_convex_g},  has $f(0)=0$, is differentiable, increasing and concave on $[0,+\infty)$ and
is twice differentiable for $w>1$.
\begin{figure}[ht!]
\centering
\includegraphics[width=.80\textwidth]{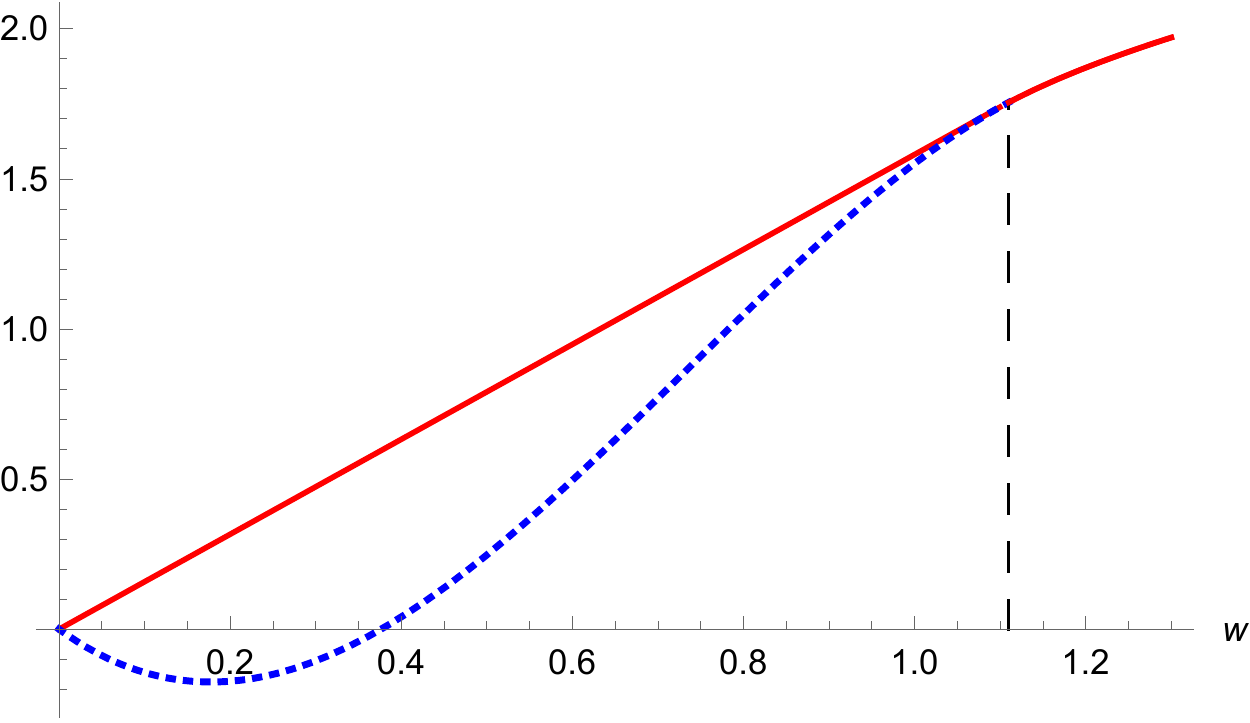}
\caption{$g$ is convex and decreasing near 0}
\label{fig:graph_convex_g}
\end{figure}
\vspace{12pt}

Now, let $\delta = 1 + \epsilon + \phi$, for $\phi>0$.  For $\epsilon = 1/10$ and $\phi = 1/100$, $f''(\delta)-\frac{2}{\delta}\left(f'(\delta) - \frac{f(\delta)}{\delta}\right) \approx -6.7$, so our condition $T_\delta$ is \emph{not} satisfied.  In fact, a few calculations reveal that the associated cubic $g$,  dotted in Figure \ref{fig:graph_convex_g},  is \emph{convex and decreasing} for $0 < w < \epsilon$.
The issue is that $f$ has too much negative curvature at $\delta$ to be concave on $[0,\delta]$ and have $f(0)=0$. Note that by Lemma \ref{lemma_cone}, we can add a small positive multiple of $\sqrt{w}$ to $f$ to get an example
that is strictly concave and not differentiable at 0.
\hfill $\diamondsuit$
\end{exa}


\section{Lower bound for roots}\label{sec:roots}

%
%

For root functions, applying our general construction,
direct calculation gives us the coefficients of $g(w) := Aw^3 + Bw^2 + Cw$~:
\begin{eqnarray*}
A &=& \delta^{p-3}(p^2-3p+2)/2,\\
B &=& -\delta^{p-2}(p^2-4p+3),\\
C &=& \delta^{p-1}(p^2-5p+6)/2.
\end{eqnarray*}


For use in global optimization, we want control over the relationship between  $f(w)$ and $g(w)$.
We believe that $g(w)\leq f(w)$ on $[0,\delta]$ for all root functions $f$. For now, we can only establish this for
$f(w):=w^p$, with $p:=1/q$ for \emph{integer} $q \geq 2$.
The case of $q=2$ was established in \cite{DFLV2014,DFLV2015}
via a much easier argument (see the proof of Part 5 of Theorem 1 in the Appendix of \cite{DFLV2014}).

\begin{thm}\label{thm:lower}
For $f(w):=w^p$, with $p=1/q$ for \emph{integer} $q \geq 2$, we have
$g(w) \leq f(w)$ for all $w \in [0,+\infty)$
\end{thm}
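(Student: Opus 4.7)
The plan is to convert the claim into a polynomial inequality via the substitution $w=\delta s^q$ (so $s\in[0,1]$ as $w$ ranges over $[0,\delta]$), and then prove it by a single differentiation that factors remarkably cleanly.

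First, the assertion is trivial on $[\delta,+\infty)$ since $g=f$ there, so attention restricts to $[0,\delta]$. Because $p=1/q$, the substitution $w=\delta s^q$ makes $f(w)=\delta^{1/q}\,s$ \emph{linear} in $s$, while $g(w)=A\delta^3 s^{3q}+B\delta^2 s^{2q}+C\delta s^q$ becomes a polynomial in $s^q$. Plugging in the explicit $A,B,C$ (and rewriting $(1-q)(1-2q)=(q-1)(2q-1)$, etc.), I would show that $f\geq g$ on $[0,\delta]$ is equivalent, after scaling by $2q^2\delta^{-1/q}$, to
\[
\Phi(s):=2q^2\,s-(2q-1)(3q-1)\,s^q+2(q-1)(3q-1)\,s^{2q}-(q-1)(2q-1)\,s^{3q}\;\geq\;0\quad\text{on }[0,1].
\]

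Next, I would factor out the simple zero at $s=0$, writing $\Phi(s)=s\cdot H(s)$ with
\[
H(s):=2q^2-(2q-1)(3q-1)\,s^{q-1}+2(q-1)(3q-1)\,s^{2q-1}-(q-1)(2q-1)\,s^{3q-1},
\]
and verify that $H(1)=0$ (this is forced by the interpolation conditions $g(\delta)=f(\delta)$, $g'(\delta)=f'(\delta)$, $g''(\delta)=f''(\delta)$ baked into the construction). The crucial step — and the one whose success is really the whole point — is to differentiate $H$ and notice the telescoping
\[
H'(s)=-(q-1)(2q-1)(3q-1)\,s^{q-2}\,(1-s^q)^2.
\]
Since $q\geq 2$ makes all three integer factors $(q-1),(2q-1),(3q-1)$ positive, this gives $H'(s)\leq 0$ on $[0,+\infty)$, so $H$ is non-increasing there. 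Combined with $H(1)=0$, this yields $H\geq 0$ on $[0,1]$, hence $\Phi=sH\geq 0$ on $[0,1]$, which is precisely $g(w)\leq f(w)$ on $[0,\delta]$.

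The place I would expect trouble is exactly the factorization of $H'$ as $-(\text{positive constant})\cdot s^{q-2}(1-s^q)^2$: it is this identity that collapses a potentially messy comparison into a one-line monotonicity argument. If that factorization failed to materialize, one would be forced into something substantially harder — for instance, writing $\Phi(s)=s(1-s)^3 R(s)$ and showing the quotient $R$ of degree $3q-4$ has non-negative coefficients. The integrality of $q$ is used essentially only so that $s^q$ is a polynomial in $s$, which is presumably why a proof along these lines stops short of settling the conjecture for general $p\in(0,1)$.
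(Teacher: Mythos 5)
Your proof is correct, and it takes a genuinely cleaner route than the paper's. The normalizations are essentially the same: you substitute $s=(w/\delta)^{1/q}$, while the paper substitutes $t=w^{1/q}$, $L=\delta^{1/q}$ and works with the homogeneous polynomial $P_q(L,t)$; setting $s=t/L$ shows your $\Phi(s)=s\,H(s)$ is exactly $P_q(1,s)$ up to a positive constant. Where the two diverge is the positivity argument. The paper exhibits the explicit factorization $P_q = Q_q(L,t)\,(L-t)^3$ and then, in two appendix lemmas involving a long sequence of binomial-coefficient cancellations and a case analysis on three families of coefficients, shows that the degree-$(3q-4)$ quotient $Q_q$ has all positive coefficients. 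You sidestep the explicit quotient entirely: you differentiate once and observe
\[
H'(s) = -(q-1)(2q-1)(3q-1)\,s^{q-2}\,(1-s^q)^2,
\]
which I have checked and is correct, then combine $H'\le 0$ with $H(1)=0$ (forced by $g(\delta)=f(\delta)$) to get $H\ge 0$ on $[0,1]$. That one-line factorization encodes the triple zero of $H$ at $s=1$ (the $(L-t)^3$ in the paper) without ever constructing $Q_q$, and replaces both appendix lemmas with a derivative computation. What the published route buys is an explicit nonnegative cofactor $Q_q$ with formulas for its coefficients, which is potentially reusable; what yours buys is brevity, transparency, and the bonus that $H$ is monotone on all of $[0,+\infty)$, so $g$ touches $f$ only at $0$ and $\delta$. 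Your closing remark — that if the derivative had failed to factor you would have had to show a quotient $R(s)$ of degree $3q-4$ has nonnegative coefficients — is precisely what the paper's proof actually does.
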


\begin{proof}
Clearly we can confine our attention to $[0,\delta]$.
 Our strategy is to express $f-g$ as the product of positive factors.  It is convenient to make several substitutions before we factor.  Starting with
\[
(f-g)(w) = w^p - \frac{\delta^{p-3}}{2}(p^2-3p+2)w^3 + \delta^{p-2}(p^2-4p+3)w^2 - \frac{\delta^{p-1}}{2}(p^2-5p+6)w,
\]
for $0 \leq w \leq \delta$, we introduce the change the variables
$t:=w^p$, $q:=1/p$ and $L:=\delta^p$ to arrive at
%
\[
 t - \frac{1}{2L^{3q-1}}\left ( \frac{1}{q^2}-\frac{3}{q}+2\right)t^{3q}+\frac{1}{L^{2q-1}}\left (\frac{1}{q^2}-\frac{4}{q}+3\right )t^{2q} - \frac{1}{2L^{q-1}}\left (\frac{1}{q^2}-\frac{5}{q}+6\right )t^{q},
\]
for $0 \leq t \leq L$.

We begin factoring by removing a positive monomial,
\begin{eqnarray*}
&=& \frac{t}{2q^2 L^{3q-1}}\left ( \phantom{\biggl(} 2q^2 L^{3q-1} - (6q^2-5q+1) L^{2q}t^{q-1} \right. \\
&& \left. + (6q^2-8q+2)L^{q}t^{2q-1} - (2q^2-3q+1)t^{3q-1} \phantom{\biggl(} \right ),
\end{eqnarray*}
so we can restrict our focus to the expression on the right, which we further simplify by making one last series of substitutions:
\begin{eqnarray*}
a &=& 2q^2,\\
b &=& 6q^2-5q+1,\\
c &=& 6q^2-8q+2,\\
d &=& 2q^2-3q+1.
\end{eqnarray*}
We claim this last expression,
\begin{equation*} \tag{$P_q$}
a L^{3q-1} - b L^{2q}t^{q-1}+cL^q t^{2q-1}-d t^{3q-1},
\end{equation*}
factors into $Q_q (L-t)^3$, where $Q_q$ is a polynomial in $L$ and $t$.  Because $0 \leq t \leq L$, $(L-t)^3$ is positive.  With the  Lemmas \ref{lem:terms} and \ref{app:lemma2} in the Appendix, we show that $Q_q$ is  positive for integer $q \geq 2$, implying that $g(w) \leq f(w)$ for $w \in [0,\delta]$ as desired.
\qed
\end{proof}

\section{Better bound}\label{sec:better}
We return, temporarily, to our general setting, where we are given a function $f$ defined over the interval $[0, +\infty)$ having the following properties:
$f(0)=0$,  $f$ is increasing and concave on $[0, +\infty)$,
$f'(w)$ and $f''(w)$ are defined on all of $(0,+\infty)$, but $f'(0)$ is undefined.

A natural and simple lower bound on $f$ is
to choose $\lambda>0$, and define the  \emph{shifted} $f$
as $h(w):= f(w+\lambda)-f(\lambda)$.
It is easy to see that
\[
h(w):= f(w+\lambda)-f(\lambda) \leq f(w),
\]
because $f$ is concave and non-negative at 0, which implies that
$f$ is subadditive on $[0,+\infty)$.

On the interval $[0,\delta]$, we wish to compare this $h$ (the shifted $f$)
to our smoothing $g$.
But $g$ is defined based on a choice of $\delta$ and
$h$ is defined based on a choice of $\lambda$, a
fair comparison is achieved by making these choices
so that the derivative at 0 is the same. In this way,
both smoothings of $f$ have the same numerical properties:
they both have the same maximum derivative (maximized
at zero where $f'$ blows up).

At $w=0$, the first derivative of $g$ is
\[
g'(0)= 3 f(\delta)/\delta - 2f'(\delta) +\delta f''(\delta)/2.
\]
We have that
\[
h'(0)=f'(\lambda).
\]
For each $\delta>0$, there is a $\lambda>0$ so that
$g'(0)=h'(0)$.
Now, suppose that $f'$ is decreasing on $[0,+\infty)$.
Then $(f')^{-1}$ exists, and
\[
\hat{\lambda} := (f')^{-1}
\left(
3 f(\delta)/\delta - 2f'(\delta) +\delta f''(\delta)/2
\right)
\]
is the value of $\lambda$ for which $g'(0)=h'(0)$.

So, in general, we want to check that for each $\delta>0$,
\[
f(w+\hat{\lambda})-f(\hat{\lambda})\leq g(w),
\tag{$*$}\label{star}
\]
for all $w\in (0,\delta)$. To go further, we now confine our
attention, once again, to root functions.

Already, \cite{DFLV2014,DFLV2015} established this for the square-root function,
though their proof has a certain weakness (see the proof of Proposition 3 in the Appendix of \cite{DFLV2014}),
relying on some numerics,
which our proof does not suffer from.
Our goal is to establish this property for all root functions.
This seems to be quite difficult, and so we set our focus now
on root functions of the form $f(w):=w^p$, with $p:=1/q$ for
\emph{integer} $q \geq 2$. We have a substantial partial result,
which as a by product provides an air-tight proof of the previous
result of \cite{DFLV2014,DFLV2015} for $q=2$.

\begin{theorem}\label{thm:better}
For root functions of the form $f(w):=w^p$, with $p:=1/q$,
(\ref{star}) holds for integers $2\leq q \leq 10,000$.
\end{theorem}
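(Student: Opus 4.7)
The plan is to reduce $(\ref{star})$ to a polynomial inequality in a single real variable and then verify it for each $q$ in the stated range by a rigorous algebraic computation. First, by the positive homogeneity $f(\sigma w) = \sigma^p f(w)$, all the data scale consistently: the coefficients from \S\ref{sec:cubic} satisfy $A_\delta = \delta^{p-3}A_1$, $B_\delta = \delta^{p-2}B_1$, $C_\delta = \delta^{p-1}C_1$, so $g_\delta(\delta u) = \delta^p\, g_1(u)$; and the matching condition $g'(0) = f'(\hat\lambda)$ then forces $\hat\lambda_\delta = \delta\,\hat\lambda_1$. Hence $(\ref{star})$ for general $\delta$ reduces to $(\ref{star})$ for $\delta = 1$, and we assume $\delta = 1$ henceforth. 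Solving $g'(0) = f'(\hat\lambda)$ explicitly using $g'(0) = (6q^2-5q+1)/(2q^2) = (3q-1)(2q-1)/(2q^2)$ yields
\[
\hat\lambda \;=\; \left(\frac{(3q-1)(2q-1)}{2q}\right)^{-\frac{q}{q-1}}.
\]

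Next, introduce $t := (w+\hat\lambda)^{1/q}$, so that $w = t^q - \hat\lambda$ and the range $w \in [0,1]$ corresponds to $t \in [\mu,\,(1+\hat\lambda)^{1/q}]$, where $\mu := \hat\lambda^{1/q}$ satisfies the simple relation $(3q-1)(2q-1)\,\mu^{q-1} = 2q$. Under this substitution the inequality $(\ref{star})$ becomes the polynomial inequality
\[
P_q(t) \;:=\; g(t^q - \hat\lambda) \;-\; t \;+\; \mu \;\geq\; 0,
\]
where $P_q$ has degree $3q$ with coefficients in the algebraic extension $\mathbb{Q}(\mu)$. A direct computation using $g(0) = 0$ and $g'(0) = (1/q)\,\mu^{\,1-q}$ yields $P_q(\mu) = P_q'(\mu) = 0$, so $t = \mu$ is a double root, and we may factor
\[
P_q(t) \;=\; (t - \mu)^2\, R_q(t),
\]
with $R_q$ a polynomial of degree $3q - 2$.

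It now suffices to show $R_q \geq 0$ on $[\mu,\,(1+\hat\lambda)^{1/q}]$. Two endpoint evaluations come for free: $R_q(\mu) = \tfrac{1}{2}P_q''(\mu)$, which a short calculation reduces to $(q-1)/(2\mu) - 4q^2(q-1)/((3q-1)(2q-1)^2)$ and is positive for each $q \geq 2$; and $P_q\bigl((1+\hat\lambda)^{1/q}\bigr) = 1 - (1+\hat\lambda)^{1/q} + \mu \geq 0$ by the subadditivity of $f$, so $R_q$ is non-negative at the right endpoint as well. To rule out interior zeros of $R_q$ for each $q$ with $2 \leq q \leq 10{,}000$, one can apply Sturm's theorem in exact arithmetic over $\mathbb{Q}(\mu)$, or equivalently express $R_q$ in the Bernstein basis of the interval and refine by de Casteljau subdivision until all sub-coefficients are non-negative. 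The case $q=2$ reduces to an inequality in the single algebraic number $\mu = 4/15$ and so gives an algebraic proof of the $q=2$ statement from \cite{DFLV2014,DFLV2015} without any numerics.

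The main obstacle is this last step. We have no closed-form factorization of $R_q$ analogous to the $(L-t)^3$ factorization in the proof of Theorem \ref{thm:lower}, nor any obvious coefficient-sign structure in a natural basis that would handle all $q$ uniformly; the double root at $\mu$ and the endpoint inequality are the only qualitative features we can extract in closed form, and the remaining positivity must be certified case by case. For each individual $q$ the verification is routine and rigorous, but the degree $3q-2$ and the $(q-1)$-dimensional algebraic extension cause the computational cost to grow quickly with $q$, which is why the present result stops at $q = 10{,}000$ rather than resolving the full conjecture.
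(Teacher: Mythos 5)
Your plan recapitulates the paper's proof almost step for step: the same rescaling to $\delta=1$, the same change of variables $t=(w+\hat\lambda)^{1/q}$ (your $\mu$ is the paper's $Q$, your $P_q$ is the paper's $K_u$ up to a positive scalar), the same identification of the double root at $\mu$, and the same endpoint argument via subadditivity of $f$. The only divergence is in the final root-counting step, and it is essentially cosmetic: where you propose Sturm sequences or Bernstein/de Casteljau subdivision on the quotient $R_q$, the paper applies the M\"obius transformation $u\mapsto \beta/(v+1)$ to $K_u$ and counts coefficient sign changes via Descartes' Rule of Signs --- which is precisely the Bernstein-coefficient sign test you mention --- so the two arguments are essentially the same.
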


\begin{proof}

The function $(g-h)(w)$, which we wish to prove is non-negative on the interval $[0,\delta]$, is
\[
\frac{\delta^{p-3}}{2}(p^2-3p+2)w^3 -\delta^{p-2}(p^2-4p+3)w^2 +\frac{\delta^{p-1}}{2}(p^2-5p+6)w
-(w+\hat{\lambda})^p+\hat{\lambda}^p ,
\]
where the shift constant $\hat{\lambda}$ for which $h'(0)=g'(0)$ is
\[
\hat{\lambda}
= (f')^{-1}(g'(0))
= \delta \left(\frac{p^2-5 p+6}{2p}\right)^{\frac{1}{p-1}}.
\]

With a few substitutions, we simplify the function and express it in polynomial form.  For the first substitution, set $q := 1/p$, $\gamma := \delta^{1/q}$, and  $t: = \frac{w}{\gamma^q}$.  We obtain a function of $t$ over $[0,1]$ that has $\gamma$ as a factor:
\[
\begin{array}{lcl}
(g-h)_t(t) &=& \gamma \left[ \frac{2 q^2-3 q+1}{2 q^2} t^3+\frac{-6 q^2+8 q-2}{2 q^2} t^2+\frac{6 q^2-5 q+1}{2 q^2} t \right.\\
 &&\left. -\left( \left(\frac{2q}{6 q^2-5 q+1}\right)^{\frac{q}{q-1}}+t\right)^{1/q}+\left(\frac{2q}{6 q^2-5 q+1}\right)^{\frac{1}{q-1}} \right].\\
 \\
\end{array}
\]
Next, we set $Q :=\left(\frac{2q}{6 q^2-5 q+1}\right)^{\frac{1}{q-1}}$ and $u :=(t+Q^q)^{1/q}$ (so $t \rightarrow u^q-Q^q$).  The resulting polynomial in $u$ is
\[
\begin{array}{lcl}
(g-h)_u(u) &= \frac{\gamma}{2q^2} &  \left[
\left(2 q^2-3 q+1\right) \left(u^q-Q^q\right)^3+\left(-6 q^2+8 q-2\right) \left(u^q-Q^q\right)^2 \right. \\
&& \left. +\left(6 q^2-5 q+1\right) \left(u^q-Q^q\right)+2q^2Q-2q^2u \right],
\end{array}
\]
for $Q \leq u \leq (1+Q^q)^{1/q}.$  Since $\gamma >0$, our task is reduced to proving that the second factor,
\[
K_u(u) := d \left(u^q-Q^q\right)^3-c\left(u^q-Q^q\right)^2 +b \left(u^q-Q^q\right)-a(u-Q), \\
\]
where
\begin{eqnarray*}
a &:=& 2q^2,\\
b &:=& 6q^2-5q+1,\\
c &:=& 6q^2-8q+2, \text{ and}\\
d &:=& 2q^2-3q+1,
\end{eqnarray*}
is non-negative for $Q \leq u \leq (1+Q^q)^{1/q}.$

It is obvious that $K_u$ has a root at $Q$.  In fact, $K_u$ has a double root at $Q$, which we can verify by showing that the first derivative of $K_u$,
\begin{eqnarray*}
K_u'(u)=3 d q  \left(u^q-Q^q\right)^2 u^{q-1}-2 c q  \left(u^q-Q^q\right)u^{q-1}+b q u^{q-1}-a,
\end{eqnarray*}
also has a root at $Q$.  This is easily accomplished by noticing that
\begin{eqnarray*}
b q u^{q-1} &=& b q \left(\left(\frac{2q}{6 q^2-5 q+1}\right)^{\frac{1}{q-1}}\right)^{q-1} = 2q^2 = a.
\end{eqnarray*}

By construction of $g$ and $h$,
\[
(g-h)_u((1+Q^q)^{1/q})=(g-h)(\delta)=(f-h)(\delta)>0,
\]
which means that $K_u((1+Q^q)^{1/q})>0$.  In order to prove that $K_u(u)\geq 0$ for $u \in (Q,(1+Q^q)^{1/q})$, it suffices to show that there are no roots in the interval $(Q,(1+Q^q)^{1/q})$.  In fact, we prove that the only root in the interval $(0,(1+Q^q)^{1/q})$ $\supseteq (Q,(1+Q^q)^{1/q})$ is the double root at $Q$.

Using a known technique (e.g., see \cite{Sagraloff}),
we apply the M\"{o}bius transformation
\[K_u\left(\frac{(1+Q^q)^{1/q}}{v+1}\right)\]
to express $K_u$, $u \in (0,(1+Q^q)^{1/q}))$, as a rational function in $v$ over the interval $(0, \infty)$.
Note that when $v=0$, $K_u\left(\frac{(1+Q^q)^{1/q}}{v+1}\right)=K_u\left((1+Q^q)^{1/q}\right),$ and as $v \rightarrow \infty$, $K_u\left(\frac{(1+Q^q)^{1/q}}{v+1}\right)\rightarrow K_u(0)$.

 Next, we calculate expressions for the coefficients of the polynomial
 \[
 K_v(v):=(v+1)^{3q} K_u\left(\frac{\beta}{v+1}\right),
 \]
where $\beta:=(1+Q^q)^{1/q}$, and the domain is $(0,\infty)$.

Expanding the binomials in $u^q$ and $Q^q$ and multiplying by $(v+1)^{3q}$, we have
\begin{eqnarray*}
&&K_v(v) =  (aQ - bQ^q -cQ^{2q}-dQ^{3q})(v+1)^{3q}  -a \beta (v+1)^{3q-1} \\
&& ~+(3d\beta^q Q^{2q} +2c\beta^q Q^q + b \beta^q)(v+1)^{2q}  -(3d\beta^{2q}Q^q + c\beta^{2q})(v+1)^q + d\beta^{3q}.
\end{eqnarray*}
Expanding binomials and collecting like terms, we find that
\begin{eqnarray*}
&& K_v(v)  =  V + W + X + Y + Z \\
&&~  + \sum_{i=1}^q \left[{q \choose q-i}W + {2q \choose 2q-i}X + {3q-1 \choose 3q-1-i}Y + {3q \choose 3q-i}Z\right]v^i \\
&& ~  + \sum_{i=q+1}^{2q} \left[{2q \choose 2q-i}X + {3q-1 \choose 3q-1-i}Y+ {3q \choose 3q-i}Z\right]v^i\\
&&~   + \sum_{i=2q+1}^{3q-1} \left[{3q-1 \choose 3q-1-i}Y + {3q \choose 3q-1}Z\right] v^i
~+~ Z v^{3q}, \\
\end{eqnarray*}
where
\begin{eqnarray*}
V & := & d\beta^{3q}, \\
W &:=&  -3d\beta^{2q}Q^q - c\beta^{2q},\\
X &:=& 3d\beta^q Q^{2q} +2c\beta^q Q^q + b \beta^q,\\
Y &:=& -a \beta, \text{ and}\\
Z &:=& -d Q^{3q} -c Q^{2q} -b Q^q + aQ.
\end{eqnarray*}

Armed with these expressions for the coefficients of the polynomials $K_v(v)$,
 we verified (with \verb;Mathematica;) that for
integers $2 \leq q \leq 10,000$,
there are exactly two sign changes in each coefficient sequence.  By Descartes' Rule of Signs, we conclude that there are at most two positive roots of $K_v(v)$ (for these values of $q$), and therefore at most two roots of $K_u(u)$ in the interval $(0, (1+Q^q)^{1/q})$ (the double root at $Q$).
\qed
\end{proof}

Our proof technique can work for any \emph{fixed} integer $q \geq 2$.
In carrying out the technique, there is some computational burden
for which we employ \verb;Mathematica;. We only carried this out for integers $2 \leq q \leq 10,000$, but in principle we could go further.
It is important to point
out that the calculations were done exactly and only truncated to
finite precision at the end.

The remaining challenge is to make a proof for all integers $q \geq 2$.
But the coefficients of $K_u\left(\frac{(1+Q^q)^{1/q}}{v+1}\right)$ are rather complicated for general $q$, so it is difficult to analyze their signs in general.

\section{Average performance for roots}\label{sec:avg}

On $[\delta,+\infty)$, $g$ coincides with $f$ by definition
(we are assuming that $f'(\delta)$ and $f''(\delta)$ are defined
so that $g$ is well defined),
while $h$ strictly under-estimates $f$ for increasing concave functions $f$ for which
$f(\delta)>0$. So it becomes interesting to
examine the performance of $g$ and $h$ near 0,
that is on the interval $[0,\delta]$. Here, we focus on
average relative performance:
\[
\frac{1}{\delta}\int_0^\delta \frac{g(w)}{f(w)} dw~,\qquad
\frac{1}{\delta}\int_0^\delta \frac{h(w)}{f(w)} dw~,
\]
 where we
are further assuming that $f(0)=0$ and $f$ is increasing.
In what follows, we compare these performance measures
for root functions.

\begin{thm}\label{thm:indep}
For $f(w) = w^p$, $0<p<1$, and $\lambda$ chosen as a function of $\delta >0$ so that $g'(0) = h'(0)$, we have that
\[
\frac{1}{\delta}\int_0^\delta \frac{g(w)}{f(w)} dw ~=~ \frac{3}{4-p},
\]
notably independent of $\delta$, and also
\[
\frac{1}{\delta}\int_0^\delta \frac{h(w)}{f(w)} dw
\]
is independent of the choice of $\delta$.
\end{thm}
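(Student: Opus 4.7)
My plan is to handle the two claims separately, both leveraging the fact that $\hat\lambda$ scales linearly with $\delta$.

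For the first claim, I would substitute the explicit coefficients $A$, $B$, $C$ from \S\ref{sec:roots}, write $g(w)/f(w) = Aw^{3-p} + Bw^{2-p} + Cw^{1-p}$, and integrate term by term to get
\[
\frac{1}{\delta}\int_0^\delta \frac{g(w)}{f(w)}\,dw = \frac{A\delta^{3-p}}{4-p} + \frac{B\delta^{2-p}}{3-p} + \frac{C\delta^{1-p}}{2-p}.
\]
The key observation is that each of $A\delta^{3-p}$, $B\delta^{2-p}$, and $C\delta^{1-p}$ reduces to a pure function of $p$ (the $\delta$-powers in $A$, $B$, $C$ exactly cancel those produced by integration and the $1/\delta$ prefactor), so the sum is automatically $\delta$-independent. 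To pin down the explicit value $3/(4-p)$, I would exploit the factorizations $p^2-3p+2 = (p-1)(p-2)$, $p^2-4p+3 = (p-1)(p-3)$, and $p^2-5p+6 = (p-2)(p-3)$; the second and third terms collapse to simple linear expressions in $p$ via $(p-3)/(3-p)=-1$ and an analogous cancellation, and then combining over the common denominator $2(4-p)$ I expect the quadratic and linear parts of the numerator to vanish, leaving a constant.

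For the second claim, the route is the change of variables $u = w/\delta$. The essential input is the explicit formula
\[
\hat\lambda = \delta\left(\frac{p^2-5p+6}{2p}\right)^{1/(p-1)} = \delta\cdot K(p),
\]
from \S\ref{sec:better}, with $K(p)$ depending only on $p$. Under $w = \delta u$, both $(w+\hat\lambda)^p - \hat\lambda^p$ and $w^p$ acquire a factor of $\delta^p$ that cancels in the ratio, and $dw = \delta\,du$ absorbs the $1/\delta$ prefactor, giving
\[
\frac{1}{\delta}\int_0^\delta \frac{h(w)}{f(w)}\,dw = \int_0^1 \frac{(u+K(p))^p - K(p)^p}{u^p}\,du,
\]
which is visibly independent of $\delta$. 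No closed-form evaluation of this integral is required for the statement of the theorem.

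I do not anticipate a substantial obstacle. The theorem really rests on two scaling facts: the homogeneity built into the coefficients $A$, $B$, $C$ for the cubic piece, and the proportionality $\hat\lambda \propto \delta$ for the shift. The only minor pitfall is sign-tracking in the Part~1 simplification, which the factorizations above should render transparent.
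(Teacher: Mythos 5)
Your proposal is correct and takes essentially the same approach as the paper: both rest on the change of variables $w = \delta v$, which strips $\delta$ out of the first integrand (because $A,B,C$ carry exactly the compensating powers $\delta^{p-3},\delta^{p-2},\delta^{p-1}$) and out of the second (because $\hat\lambda \propto \delta$). Your intended algebraic cleanup does go through: the three summands become $\tfrac{(p-1)(p-2)}{2(4-p)}$, $p-1$, and $\tfrac{3-p}{2}$, which combine over $2(4-p)$ to a constant numerator of $6$, yielding $\tfrac{3}{4-p}$, matching the paper.
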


\begin{proof}
We apply the change of variable $w \rightarrow \delta v$ to rewrite each expression without $\delta$.  For the first integral, letting $a := \frac{1}{2}(p^2 -3p+2)$, $b:= p^2-4p+3$, and $c := \frac{1}{2}(p^2-5p+6)$, we have
\begin{align*}
\frac{1}{\delta}\int_0^\delta \frac{g(w)}{f(w)} dw &~= \frac{1}{\delta} \int_0^\delta \frac{\delta^{p-3}aw^3 - \delta^{p-2}bw^2 + \delta^{p-1}cw}{w^p} dw  \\
&~= \int_0^1 \left(av^{3-p}-bv^{2-p} +cv^{1-p}\right) dv.  \\
&~= \frac{a}{4-p} -\frac{b}{3-p} +\frac{c}{2-p} ~=~ \frac{3}{4-p}~.
\end{align*}
Letting $P := \left(\frac{p^2-5 p+6}{2p}\right)^{\frac{1}{p-1}}$ in the second integral, the change of variable produces
\begin{align*}
\frac{1}{\delta}\int_0^\delta \frac{h(w)}{f(w)} dw &~= \frac{1}{\delta} \int_0^\delta \frac{(w+\delta P)^p-(\delta P)^p}{w^p} dw  \\
&~= \int_0^1 \frac{(v+P)^p - P^p}{v^p} dv,
\end{align*}
and again $\delta$ disappears from the expression.
\qed
\end{proof}

We note that $\frac{3}{4-p}$ is increasing in $p$, and so its infimum
on $(0,1)$ is $\frac{3}{4}$ at $p=0$.
Additionally, we note that
there is no closed-form expression  for the last integration of the proof, though it can be expressed in terms of an evaluation of a Gaussian/ordinary hypergeometric function $\tensor*[_2]{\rm F}{_1}$ (see \cite{andrews1999}).
Specifically
\[
\left(\frac{p^2-5 p+6}{2p}\right)^{\frac{p}{p-1}}
\left( \frac{-1+\tensor*[_2]{\rm F}{_1}(1-p\, ,\, -p\, ;\, 2-p\, ;\, -1/p)}{1-p}\right)~.
\]
In the key special case of $p=1/2$, we do get the closed-form expression \[
\frac{8}{15}\left(
-1+
\frac{3615+16\sqrt{241}\,{\rm ArcSinh}\!\left(\frac{15}{4}\right)}{120\sqrt{241}}
\right)\approx 0.646125
\]
(where ${\rm ArcSinh}(x)=\ln\left(x+\sqrt{x^2+1}\right)$),
which  is significantly less than
$\left.\frac{3}{4-p}\right\rvert_{p=\frac{1}{2}}= \frac{6}{7} \approx 0.857143$.

In Figure \ref{fig:averagefor roots}
we have plotted the two performance measures,
varying $p$ on $(0,1)$; Theorem \ref{thm:indep}
allows us to do this without separate curves for different $\delta$.
We can readily see that $g$ outperforms $h$ bigly,
with the performance gap being
most extreme as $p\to 0$, and decreasing in $p$
on $(0,1)$.

\begin{figure}[ht]
\begin{center}
\includegraphics[width=.95\textwidth]{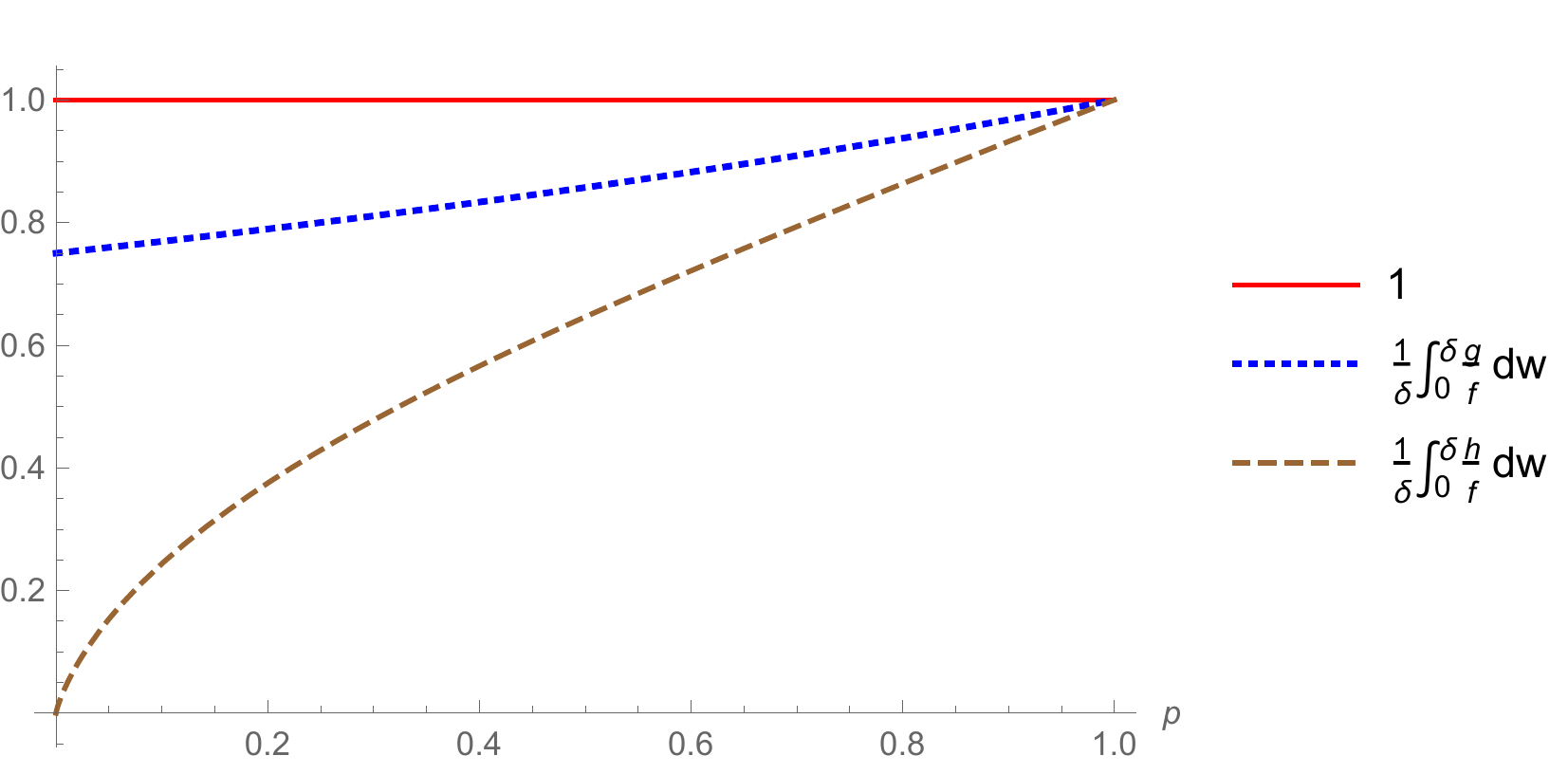}
\captionsetup{justification=centering}
\caption{Average relative performance on $[0,\delta]$ as a function of $p$\\ ($f(w):=w^p$,~ $0<p<1$)}
\label{fig:averagefor roots}
\end{center}
\end{figure}

\section{Alternatives, software, extended use, and ongoing work}\label{sec:future}

\subsection{Alternatives}
We do not mean to imply that our smoothing ideas are the only viable or even preferred way
to handle all instances of the functions to which our ideas apply. For example, there
is the possibility to take a function $f(w)$ and replace it with a new variable $y$ and
the constraint $f^{-1}(y)=w$. For example, $f(w):=w^p$, with $0<p<1$, can be replaced
with $y$ and the constraint $w=y^{\frac{1}{p}}$, which is now smooth at $w=0$.
But there is the computational cost of including an additional non-linear equation in a model.
In fact, this could turn an unconstrained model into a constrained model. For other functions,
we may not have a nice expression for the inverse readily available.
Even when we do, there can be other issues to consider.
Taking again $f(w):=w^p$, with $0<p<1$,
suppose that we have $\psi:\mathbb{R}^m\rightarrow \mathbb{R}_+$
and $\phi:\mathbb{R}^n\rightarrow \mathbb{R}$. Now suppose that
we have a model with the constraint $f(\psi(y))\geq \phi(x)$.
Of course $y$ and $x$ may be involved in other constraints as well.
Now, suppose further that the range of $\phi$ on the feasible region
is $(-\infty,+\infty)$. There could then be a difficulty
in trying to work with $\psi(y) \geq f^{-1}(\phi(x))$, which we could repair by
instead working with $\psi(y) \geq f^{-1}(\phi_+(x))$, where $\phi_+(x):=\max\{\phi(x),0\}$.
But this means working now with the piecewise-defined function $\phi_+$, which can involve
a lot of pieces (e.g., consider the univariate $\phi(x):=\sin(x)$).

In the end, we do not see our technique as a panacea, but
rather as a viable method with nice properties that modelers and solvers should have in their
bags.

\subsection{Software}\label{subsec:soft}

In the context of the square-root smoothing of \cite{DFLV2014,DFLV2015},
a new and exciting experimental (``$\alpha$'') feature was developed for \verb;SCIP; Version 3.2
(see \cite{SCIP32})
to handle univariate piecewise
functions that are user-certified (through \verb;AMPL; suffixes)
as being globally convex or concave.  At this writing,
\verb;SCIP; is the only global solver that can accommodate
such functions.
Such a feature is extremely useful for taking advantage of the results that we present here, because our smoothings (like those of \cite{DFLV2014,DFLV2015}) are piecewise-defined, and so the user
must identify global concavity to the solver.
This is accomplished through the new
\verb;SCIP; operator type \verb;SCIP_EXPR_USER;. A bit of detail about this feature, from \cite{SCIP32},
is enlightening:

\begin{quote}
``Currently, the following callbacks can or have to be provided:
computing the value, gradient, and Hessian of a user-function, given values (numbers or
intervals) for all arguments of the function; indicating convexity/concavity based on bounds
and convexity information of the arguments; tighten bounds on the arguments of the function
when bounds on the function itself are given; estimating the function from below or above by
a linear function in the arguments; copy and free the function data. Currently, this feature is
meant for low-dimensional (few arguments) functions that are fast to evaluate and for which
bound propagation and/or convexification methods are available that provide a performance
improvement over the existing expression framework.''
\end{quote}

\subsection{Extended use}
Our techniques have broader applicability than to functions that are purely concave (or symmetrically, convex). In general, given a univariate piecewise-defined function
that is concave or convex on each piece, and possibly non-differentiable
at each breakpoint, we can seek to find a smoothing that closely mimics and approximates the function. It is not at all clear how to accommodate such functions in global-optimization software like \verb;SCIP;; because such functions are not, in general, \emph{globally} concave or convex,
they cannot be correctly handled with the new \verb;SCIP; feature (see \S\ref{subsec:soft}). Still, such functions and their smoothings can be useful within the common paradigm of seeking (good) local optima for non-linear-optimization formulations.

For example, for $0<p<1$, consider the function
\[
f(w):= \left\{
         \begin{array}{ll}
           w^p,\quad & w\geq 0; \\
           -(-w)^p,\quad & w\leq 0.
         \end{array}
       \right.
\]
This function is continuous, increasing, convex on $(-\infty,0]$,
concave on $[0,+\infty)$ and of course not differentiable at 0.
We would like to replace it with a function that has
all of these properties but is somewhat smooth at 0.
If we apply our smoothing to $f$ separately, for
$w\geq 0$ and for $w\leq 0$, we would arrive at
a function of the form
\[
g(w):= \left\{
         \begin{array}{ll}
           w^p,\quad & w\geq \delta; \\
           Aw^3+Bw^2+Cw,\quad & 0\leq w \leq \delta;\\
           Aw^3-Bw^2+Cw,\quad  & -\delta \leq w \leq 0;\\
            -(-w)^p,\quad & w\leq -\delta,
         \end{array}
       \right.
\]
for appropriate $A,B,C$ (see \S\ref{sec:general}).
It is easy to check that the resulting $g$ is continuous, \emph{differentiable at 0}, twice differentiable
everywhere but at 0, increasing, convex on $(-\infty,0]$, and
concave on $[0,+\infty)$. In short, $g$ mimics $f$ very well, but is smoother.
Moreover, for $p=1/q$, with integer $q\geq 2$, $g$ upper bounds $f$ on $(-\infty,0]$ and lower bounds $f$ on $[0,+\infty)$.

Note that the obvious ``double shift''
\[
h(w):= \left\{
         \begin{array}{ll}
           (w+\lambda)^p,\quad & w> 0; \\
           ?, \quad & w= 0; \\
           -(-w+\lambda)^p,\quad & w< 0
         \end{array}
       \right.
\]
is not even continuous at 0.

Additionally, for $p=1/q$, with integer $2\leq q\leq 10,000$, in the sense of
\S\ref{sec:better},
$g$ is a better upper bound
on $f$ than $h$ on $(-\infty,0]$,
and $g$ is a better lower bound
on $f$ than $h$ on
 $[0,+\infty)$.

\subsection{Ongoing work}

To extend the applicability of our results, we are pursuing two
directions:
\begin{itemize}
\item We would like to generalize Theorem \ref{thm:lower} to all root functions
$w^p$ with $0<p<1$.
A strategy that we are exploring is to try to make a
 similar proof to what we have, for the case in which $p$ is rational, and then employ a continuity argument to establish the result for all real exponents.
\item  We would like to generalize Theorem \ref{thm:better} for all root functions
$w^p$ with $0<p<1$. For now, that seems like a rather ambitious goal, and
what is more in sight is  generalizing Theorem \ref{thm:better}
for \emph{all} integer $q \geq 2$. To do this we are trying to sharpen
our arguments employing Descartes' Rule of Signs, or, alternatively, to
develop a sum-of-squares argument.
\end{itemize}

\begin{acknowledgements}
The authors gratefully acknowledge the anonymous referee
who proposed the performance measure studied in \S\ref{sec:avg}.
J. Lee gratefully acknowledges partial support from
ONR grant N00014-14-1-0315.
\end{acknowledgements}

\bibliographystyle{amsplain}
\bibliography{smooth}

\newpage

%
%
%

%

\section*{Appendix}

\begin{lem}\label{lem:terms}
The polynomial $P_q$ as defined above for integer $q \geq 2$ can be expressed as $Q_q (L-t)^3$, where polynomial $Q_q$ has the following $3q-3$ terms:
\begin{eqnarray*}
{i+2 \choose 2}a L^{3q-4-i} t^{i}, &\mbox{for}& i = 0,1, \dots, q-2;\\
\left[ {i+2 \choose 2}a - {i-q+3 \choose 2}b \right] L^{3q-4-i} t^{i}, &\mbox{for}& i = q-1,q, \dots, 2q-2;\\
\left[{i+2 \choose 2}a - {i-q+3 \choose 2}b +{i-2q+3 \choose 2}c \right] L^{3q-4-i} t^{i}, &\mbox{for}& i = 2q-1,2q, \dots, 3q-4.
\end{eqnarray*}
Note that for $q=2$, $2q-2 = 3q-4 = 2$, so there are no terms of the third type.
\end{lem}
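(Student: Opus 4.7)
The plan is to verify the claimed factorization $P_q = Q_q(L-t)^3$ by a direct coefficient comparison. Writing $Q_q = \sum_{k=0}^{3q-4} q_k L^{3q-4-k} t^k$ with the $q_k$ stated in the lemma (and extending by $q_k := 0$ for $k$ outside $[0, 3q-4]$), expansion of $(L-t)^3 = L^3 - 3L^2 t + 3Lt^2 - t^3$ shows that the coefficient of $L^{3q-1-k} t^k$ in $Q_q(L-t)^3$ equals the third backward difference $(\Delta^3 q)_k := q_k - 3q_{k-1} + 3q_{k-2} - q_{k-3}$. The corresponding coefficient in $P_q$ is $a$, $-b$, $c$, $-d$ at $k = 0, q-1, 2q-1, 3q-1$ respectively, and $0$ elsewhere, so the entire proof reduces to verifying this jump pattern for $(\Delta^3 q)_k$.

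The main tool is a discrete identity for $\varphi(n) := \binom{n+2}{2}$ for $n \ge 0$ and $\varphi(n) := 0$ for $n < 0$: since $\varphi$ is a quadratic on $n \ge 0$, its third backward difference vanishes for $n \ge 3$, and a direct check at $n = 0, 1, 2$ yields $\Delta^3 \varphi(n) = \mathbf{1}_{n = 0}$ for every integer $n$. The three-case formula for $q_k$ can then be unified as
\[
q_k \;=\; a\,\varphi(k) \;-\; b\,\varphi(k - q + 1) \;+\; c\,\varphi(k - 2q + 1),
\]
valid on $0 \le k \le 3q-4$ (the three cases in the lemma correspond to the sign pattern of $k - q + 1$ and $k - 2q + 1$). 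Applying $\Delta^3$ termwise and using the shifted identity $\Delta^3 \varphi(k - s) = \mathbf{1}_{k = s}$ immediately recovers the required jumps $a$ at $k = 0$, $-b$ at $k = q - 1$, and $c$ at $k = 2q - 1$.

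The main obstacle is the right endpoint, where the truncation $q_k = 0$ for $k > 3q-4$ interferes with the unified formula and where the missing jump $-d$ at $k = 3q-1$ must still be produced. The cleanest resolution is to append a fourth summand, setting $q_k^* := q_k - d\,\varphi(k - 3q + 1)$. The same $\Delta^3$ identity then supplies the needed $-d\,\mathbf{1}_{k = 3q-1}$, and it suffices to show $q_k^* \equiv 0$ for $k \ge 3q - 3$. For $k \ge 3q - 1$ all four arguments of $\varphi$ are nonnegative, so $q_k^*$ is a polynomial of degree $\le 2$ in $k$; its $k^2$ coefficient is $\tfrac12(a - b + c - d)$, which is easily seen to be $0$, and the linear and constant coefficients vanish by two analogous polynomial identities in $q$ that follow from the explicit values $a = 2q^2$, $b = (2q-1)(3q-1)$, $c = 2(q-1)(3q-1)$, $d = (q-1)(2q-1)$. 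The remaining two boundary values $q_{3q-3}^* = q_{3q-2}^* = 0$ reduce to short binomial evaluations. Once $q_k^* \equiv 0$ on $k \ge 3q - 3$ is established, the full difference equation holds and the claimed explicit form of $Q_q$ follows.
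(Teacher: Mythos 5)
Your proof is correct, and the underlying strategy is the same as the paper's: expand $Q_q(L-t)^3$ and match coefficients against $P_q$. What differs is the packaging, and the difference is worth noting. The paper verifies the match directly for each exponent $j$ from $0$ to $3q-1$, splitting into roughly a dozen cases and, because the general pattern only falls out cleanly for $q \geq 5$, it handles $q = 2, 3, 4$ by explicit expansion. Your unified formula $q_k = a\varphi(k) - b\varphi(k-q+1) + c\varphi(k-2q+1)$ with the truncated quadratic $\varphi(n) := \binom{n+2}{2}\mathbf{1}_{n\geq 0}$, together with the identity $\Delta^3\varphi(n) = \mathbf{1}_{n=0}$, collapses all of the paper's ``interior'' cases into a single termwise application of the finite-difference operator, and reduces the whole verification to: (i) the three polynomial identities in $q$ that make the degree-$\leq 2$ expression $a\varphi(k) - b\varphi(k-q+1) + c\varphi(k-2q+1) - d\varphi(k-3q+1)$ vanish identically for $k \geq 3q-1$ (I checked: the $k^2$, $k$, and constant coefficients are each $0$), and (ii) the two boundary evaluations at $k = 3q-3$ and $k = 3q-2$, both of which vanish after factoring out $q(3q-1)$. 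This argument is uniform in $q \geq 2$ and noticeably shorter. One small notational slip: after the three-case unification you write $q_k^* := q_k - d\varphi(k-3q+1)$ and then ask for $q_k^* \equiv 0$ on $k \geq 3q-3$; but $q_k$ there should be read as the \emph{unified three-$\varphi$ expression extended to all $k$}, not as the truncated coefficient sequence (which is already $0$ there). With that reading the argument is airtight.
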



\begin{proof}
We expand $Q_q (L-t)^3$ to see that it is equivalent to $P_q$, first for specific cases $q = 2, 3 \mbox{ and } 4$, and finally for general $q \geq 5$.  The following
are easily verified:
\begin{eqnarray*}
 &&\underline{Q_q (L-t)^3 = P_q}\\
q=2:&&(8L^2+9Lt+3t^2) (L-t)^3 = 8L^5-15L^4t+10L^2t^3-3t^5;\\
q=3:&&(18L^5+54L^4t+68L^3t^2+60L^2t^3+30Lt^4+10t^5) (L-t)^3 \\
&&\quad = 18L^8-40L^6t^2+32L^3t^5-10t^8;\\
q=4:&& (32L^8+96L^7 t+192L^6t^2+243L^5t^3+249L^4t^4+210L^3t^5  \\
&&\quad  +126L^2t^6+63L t^7+21t^8) (L-t)^3  = 32L^{11}-77L^8t^3+66L^4t^7-21t^{11}.
\end{eqnarray*}
Now considering general $q \geq 5$, most of the terms of $Q_q(L-t)^3$ cancel out due to the following equations:
\begin{eqnarray*}
(-3){2 \choose 2} + {3 \choose 2} &= -3+3&= 0;\\
(3){2 \choose 2} +(-3){3 \choose 2} + {4 \choose 2}&=3-9+6&= 0; \\
\end{eqnarray*}
and for $i \geq 3,$
\begin{eqnarray*}
(-1){i-1 \choose 2} + (3){i \choose 2} +(-3){i+1 \choose 2} + {i+2 \choose 2}&&=0.\\
\end{eqnarray*}
If the expression for the coefficient of $t^j$, $0\leq j \leq 3q-1$, has more than one term involving $a$, $b$, or $c$, that variable ($a$, $b$, or $c$) has a coefficient of one of the forms above and cancels out.  The only time the variable remains is when it has only a single term in the expression.  The terms of $Q_q(L-t)^3 = Q_q(-t^3+3Lt^2-3L^2t+L^3)$ for $q \geq 5$ increasing in the degree $j$ of $t$ are as follows:
\begin{eqnarray*}
j=0:&& \left(aL^{3q-4}\right)\left(L^3\right)=aL^{3q-1} \\
j=1:&& \left(aL^{3q-4}\right)\left(-3L^2t\right) + \left(3aL^{3q-5}t\right)\left(L^3\right) \\
&&  =(-3 + 3)aL^{3q-2}t \\
&&  = 0\\
j=2:&& \left(aL^{3q-4}\right)\left(3Lt^2\right) + \left(3aL^{3q-5}t\right)\left(-3L^2t\right) + \left(6aL^{3q-6}t^2\right)\left(L^3\right)\\
&&  = (3-9+6)aL^{3q-3}t^2\\
&&  = 0 \\
3 \leq j \leq q-2:&& {j-1 \choose 2}aL^{3q-1-j}t^{j-3}\left(-t^3\right) + {j \choose 2}aL^{3q-2-j}t^{j-2}\left(3Lt^2\right) + \\
&&\quad {j+1 \choose 2}aL^{3q-3-j}t^{j-1}\left(-3L^2t\right) + {j+2 \choose 2}aL^{3q-4-j}t^{j}\left(L^3\right)\\
&& = \left[(-1){j-1 \choose 2} + (3){j \choose 2} +(-3){j+1 \choose 2} + {j+2 \choose 2}\right]aL^{3q-1-j}t^j\\
&&= 0\\
j=q-1:&& {q-2 \choose 2}aL^{2q}t^{q-4}\left(-t^3\right) + {q-1 \choose 2}aL^{2q-1}t^{q-3}\left(3Lt^2\right) + \\
&&\quad {q \choose 2}aL^{2q-2}t^{q-2}\left(-3L^2t\right) + \left[{q+1 \choose 2}a-b\right]L^{2q-3}t^{q-1} \left(L^3\right)\\
&&= \left[(-1){q-2 \choose 2} + (3){q-1 \choose 2} +(-3){q \choose 2} + {q+1 \choose 2}\right]aL^{2q}t^{q-1} \\
&& \quad -bL^{2q}t^{q-1}\\
&&= -bL^{2q}t^{q-1}\\
j=q: &&{q-1 \choose 2}aL^{2q-1}t^{q-3}\left(-t^3\right) + {q \choose 2}aL^{2q-2}t^{q-2}\left(3Lt^2\right) + \\
&&\quad \left[{q+1 \choose 2}a-b\right]L^{2q-3}t^{q-1}\left(-3L^2t\right) + \left[{q+2 \choose 2}a-3b\right]L^{2q-4}t^{q} \left(L^3\right)\\
&& =\left[(-1){q-1 \choose 2} + (3){q \choose 2} +(-3){q+1 \choose 2} + {q+2 \choose 2}\right]aL^{2q-1}t^{q} \\
&& \quad +(3-3)bL^{2q-1}t^{q}\\
&& = 0\\
j=q+1: &&{q \choose 2}aL^{2q-2}t^{q-2}\left(-t^3\right) \\
&&\quad + \left[{q+1 \choose 2}a-b\right]L^{2q-3}t^{q-1}\left(3Lt^2\right) \\
&&\quad  + \left[{q+2 \choose 2}a-3b\right]L^{2q-4}t^{q}\left(-3L^2t\right) \\
&&\quad + \left[{q+3 \choose 2}a-6b\right]L^{2q-5}t^{q+1} \left(L^3\right) \\
&& = 0\\
q+2 \leq j \leq 2q-2: &&\left[{j-1 \choose 2}a-{j-q \choose 2}b\right]L^{3q-1-j}t^{j-3}\left(-t^3\right) \\
&& \quad + \left[{j \choose 2}a-{j-q+1 \choose 2}b\right]L^{3q-2-j}t^{j-2}\left(3Lt^2\right) \\
&&\quad + \left[{j+1 \choose 2}a-{j-q+2 \choose 2}b\right]L^{3q-3-j}t^{j-1}\left(-3L^2t\right) \\
&&\quad + \left[{j+2 \choose 2}a-{j-q+3 \choose 2}b\right]L^{3q-4-j}t^j \left(L^3\right)\\
&& = 0\\
j=2q-1:&&\left[{2q-2 \choose 2}a-{q-1 \choose 2}b\right]L^{q}t^{2q-4}\left(-t^3\right) \\
&& \quad + \left[{2q-1 \choose 2}a-{q \choose 2}b\right]L^{q-1}t^{2q-3}\left(3Lt^2\right) \\
&&\quad + \left[{2q \choose 2}a-{q+1 \choose 2}b\right]L^{q-2}t^{2q-2}\left(-3L^2t\right) \\
&&\quad + \left[{2q+1 \choose 2}a-{q+2 \choose 2}b+c\right]L^{q-3}t^{2q-1} \left(L^3\right)\\
&& = cL^qt^{2q-1} \\
j=2q:&&\left[{2q-1 \choose 2}a-{q \choose 2}b\right]L^{q-1}t^{2q-3}\left(-t^3\right) \\
&& \quad + \left[{2q \choose 2}a-{q+1 \choose 2}b\right]L^{q-2}t^{2q-2}\left(3Lt^2\right) \\
&&\quad + \left[{2q+1 \choose 2}a-{q+2 \choose 2}b+c\right]L^{q-3}t^{2q-1}\left(-3L^2t\right) \\
&&\quad + \left[{2q+2 \choose 2}a-{q+3 \choose 2}b+3c\right]L^{q-4}t^{2q} \left(L^3\right)\\
&& = 0 \\
j=2q+1:&&\left[{2q \choose 2}a-{q+1 \choose 2}b\right]L^{q-2}t^{2q-2}\left(-t^3\right) \\
&& \quad + \left[{2q+1 \choose 2}a-{q+2 \choose 2}b+c\right]L^{q-3}t^{2q-1}\left(3Lt^2\right) \\
&&\quad + \left[{2q+2 \choose 2}a-{q+3 \choose 2}b+3c\right]L^{q-4}t^{2q}\left(-3L^2t\right) \\
&&\quad + \left[{2q+3 \choose 2}a-{q+4 \choose 2}b+6c\right]L^{q-5}t^{2q+1} \left(L^3\right)\\
&& = 0 \\
2q+2 \leq j \leq 3q-4:&&\left[{j-1 \choose 2}a-{j-q \choose 2}b+{j-2q \choose 2}c\right]L^{3q-1-j}t^{j-3}\left(-t^3\right) \\
&& \quad + \left[{j \choose 2}a-{j-q+1 \choose 2}b+{j-2q+1 \choose 2}c\right]L^{3q-2-j}t^{j-2}\left(3Lt^2\right) \\
&&\quad + \left[{j+1 \choose 2}a-{j-q+2 \choose 2}b+{j-2q+2 \choose 2}c\right]L^{3q-3-j}t^{j-1}\left(-3L^2t\right) \\
&&\quad + \left[{j+2 \choose 2}a-{j-q+3 \choose 2}b+{j-2q+3 \choose 2}c\right]L^{3q-4-j}t^{j} \left(L^3\right)\\
&& = 0 \\
\end{eqnarray*}
The cancellation pattern above fails for the last three terms.  It is necessary to replace $a$, $b$, and $c$ with the equivalent expressions involving $q$ to verify each of the following.  
\begin{eqnarray*}
j=3q-3:&& \left[{3q-4 \choose 2}a - {2q-3 \choose 2}b + {q-3 \choose 2}c\right]L^2t^{3q-6}(-t^3) \\
&& \quad + \left[{3q-3 \choose 2}a - {2q-2 \choose 2}b + {q-2 \choose 2}c\right]Lt^{3q-5}(3Lt^2) \\
&& \quad + \left[{3q-2 \choose 2}a - {2q-1 \choose 2}b + {q-1 \choose 2}c\right]t^{3q-4}(-3L^2t) \\
&&  =0\\
j=3q-2:&&  \left[{3q-3 \choose 2}a - {2q-2 \choose 2}b + {q-2 \choose 2}c\right]Lt^{3q-5}(-t^3) \\
&& \quad + \left[{3q-2 \choose 2}a - {2q-1 \choose 2}b + {q-1 \choose 2}c\right]t^{3q-4}(3Lt^2) \\
&& =0\\
j=3q-1:&& \left[{3q-2 \choose 2}a - {2q-1 \choose 2}b + {q-1 \choose 2}c\right]t^{3q-4}(-t^3) =-dt^{3q-1}
\end{eqnarray*}
\qed
\end{proof}

\begin{lem}\label{app:lemma2}
The polynomial $Q_q$ as defined in the previous lemma for integers $q \geq 2$ has all positive coefficients.
\end{lem}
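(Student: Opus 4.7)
The plan is to establish positivity of the coefficients of $Q_q$ in three separate cases corresponding to the three index ranges identified in Lemma~\ref{lem:terms}. The first range $i \in \{0,\ldots,q-2\}$ is immediate, since the coefficient equals $\binom{i+2}{2}\,a$ with $a=2q^{2}>0$. For the second and third ranges I will shift the index and exploit the factorizations $a = 2q^{2}$, $b = (2q-1)(3q-1)$, $c = 2(3q-1)(q-1)$, $d = (2q-1)(q-1)$, which are hidden in the expressions given in Lemma~\ref{lem:terms} but are crucial for the rest of the argument.

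For the second range $i \in \{q-1,\ldots,2q-2\}$, I would substitute $j := i-q+1$, so $j \in \{0,1,\ldots,q-1\}$, expand each binomial coefficient as a polynomial in $j$, and collect terms to derive
\[
2\!\left[\binom{i+2}{2}a - \binom{i-q+3}{2}b\right] = (q-1)\,\psi(j),
\]
where $\psi(j):=-(4q-1)j^{2}+(4q^{2}-12q+3)j+2(q-1)(q^{2}+3q-1)$. The punchline is then that $\psi$ is concave in $j$ (leading coefficient $-(4q-1)<0$) and that $\psi(0)=2(q-1)(q^{2}+3q-1)>0$ and $\psi(q-1)=q(q-1)(2q-1)>0$ for every $q\ge 2$. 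Since any concave function lies above its chord, $\psi(j)$ is bounded below on $[0,q-1]$ by the linear interpolant of two positive values, hence is strictly positive there, and so the coefficient is strictly positive.

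For the third range $i \in \{2q-1,\ldots,3q-4\}$ (which is empty when $q=2$), I would substitute $k := i-2q+1$, so $k \in \{0,1,\ldots,q-3\}$, and expand analogously to obtain
\[
2\!\left[\binom{i+2}{2}a - \binom{i-q+3}{2}b + \binom{i-2q+3}{2}c\right] = (q-1)(2q-1)\!\left[k^{2} - (2q-3)k + (q-1)(q-2)\right].
\]
The bracketed quadratic in $k$ has discriminant $(2q-3)^{2}-4(q-1)(q-2)=1$, so its roots are the integers $q-1$ and $q-2$, and it factors as $(k-q+1)(k-q+2)$. For every $k \in \{0,\ldots,q-3\}$ both linear factors are strictly negative, so their product---and hence the coefficient, which equals $\tfrac{1}{2}d\,(q-1-k)(q-2-k)$---is strictly positive.

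The main obstacle is spotting the correct algebraic simplifications and then verifying the two identities above. The two small miracles that drive the argument are: (i) in the second range, the $j^{2}$, $j^{1}$, and $j^{0}$ coefficients all share the common factor $(q-1)$, which reduces positivity to concavity combined with two positive endpoint evaluations; and (ii) in the third range, the emerging quadratic in $k$ has unit discriminant and integer roots $q-1,\,q-2$ that both sit strictly above the relevant range of $k$, yielding a transparent sign-positive factorization. Neither step requires induction on $q$, so the argument is uniform in $q\ge 2$, with $q=2$ handled by the first two cases alone.
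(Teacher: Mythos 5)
Your proposal is correct, and the algebra checks out. For the first range you give exactly the trivial argument. For the second range your argument is essentially the paper's: both reduce to showing a quadratic in the shifted index is concave (leading coefficient negative) and positive at the two endpoints; you factor out $(q-1)$ first, which tidies the expressions ($\psi(0)=2(q-1)(q^{2}+3q-1)$, $\psi(q-1)=q(q-1)(2q-1)$) but the endpoint values agree with the paper's $C_2(q)=(q-1)^2(q^2+3q-1)$ and $C_2(2q-1)$ up to that common factor. Where you genuinely depart from the paper is the third range: the paper treats the coefficient as a convex quadratic in the index, shows its vertex lies to the right of $3q-3$ so it is decreasing on the relevant interval, and then checks only the right endpoint $C_3(3q-3)=2q^2-3q+1>0$. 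You instead observe that, after factoring out $(q-1)(2q-1)$, the quadratic in $k$ has discriminant exactly $1$ and hence integer roots $q-1$ and $q-2$, both strictly larger than every admissible $k\in\{0,\dots,q-3\}$, so each coefficient equals $\tfrac12 d\,(q-1-k)(q-2-k)$ with both linear factors positive. This is a sharper and more transparent argument than the monotonicity-plus-endpoint check: it produces a closed-form sign-positive factorization of each coefficient rather than an inequality chain, and makes the positivity manifest at a glance.
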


\begin{proof}
We consider each of the three types of coefficients of $Q_q$ separately.  The first type of coefficients,
\[
{i+1 \choose 2}2q^2, \mbox{ for } i = 1,2, \dots, q-1,
\]
are obviously all positive.

Coefficients of the second type have the form
\[
{i+1 \choose 2}a - {i-q+2 \choose 2}b, \mbox{ for } i = q, q+1, \dots, 2q-1.
\]
The real function $C_2(x) = \frac{1}{2}(x+1)(x)a - \frac{1}{2}(x-q+2)(x-q)b, ~x \in [q,2q-1]$, has second derivative $C_2''(x)=-4q^2+5q-1$, which is negative for $q>1$.  Therefore, $C_2$ is concave on the interval $[q,2q-1]$.  Evaluating $C_2$ at the ends of the interval, we find that $C_2(q)=q^4+q^3-6q^2+5q-1$ and $C_2(2q-1)=q^4-\frac{5}{2}q^3+2q^2-\frac{1}{2}q$, both of which are positive for $q>1$.  We conclude that $C_2$ is positive over the interval $[q,2q-1]$, and all of the type-two coefficients are positive.

Finally, the third type of coefficients have the form
\[
{i+1 \choose 2}a - {i-q+2 \choose 2}b +{i-2q+2 \choose 2}c, \mbox{ for } i = 2q, 2q+1, \dots, 3q-3.
\]
As above, we consider the real extension of this function, $C_3(x) = \frac{1}{2}(x+1)(x)a - \frac{1}{2}(x-q+2)(x-q+1)b +\frac{1}{2}(x-2q+2)(x-2q+1)c, ~x \in [2q,3q-3]$.  The first derivative of this function, $C_3'(x) = (2q^2-3q+1)x -(6q^3-12q^2+\frac{15}{2} q -\frac{3}{2})$, is linear in $x$ and has positive slope for $q>1$.  Furthermore, the $x$ intercept of $C_3'(x)$ is $x = 3q-\frac{3}{2}$.  This means that $C_3'(x)<0$ for $x < 3q-\frac{3}{2}$.  In particular, $C_3(x)$ is decreasing on the interval $[2q, 3q-3]$.  The right end of this interval is $C_3(3q-3)=2q^2-3q+1$, which is positive for $q>1$, and all of the type-three terms are positive.
\qed
\end{proof}

\end{document}